\DeclareSymbolFontAlphabet{\mathbb}{AMSb}
\DeclareSymbolFontAlphabet{\mathbbl}{bbold}
\def\@endtheorem{\endtrivlist}
\theoremstyle{plain}
\newtheorem*{theorem*}{\protect\TheoremName}
\newtheorem*{acknowledgement*}{\protect\AcknowledgmentName}
\newtheorem*{algorithm*}{\protect\AlgorithmName}
\newtheorem*{assumption*}{\protect\AssumptionName}
\newtheorem*{assumptions*}{\protect\AssumptionsName}
\newtheorem*{axiom*}{\protect\AxiomName}
\newtheorem*{condition*}{\protect\ConditionName}
\newtheorem*{conditions*}{\protect\ConditionsName}
\newtheorem*{case*}{\protect\CaseName}
\newtheorem*{claim*}{\protect\ClaimName}
\newtheorem*{conclusion*}{\protect\ConclusionName}
\newtheorem*{conjecture*}{\protect\ConjectureName}
\newtheorem*{corollary*}{\protect\CorollaryName}
\newtheorem*{criterion*}{\protect\CriterionName}
\newtheorem*{definition*}{\protect\DefinitionName}
\newtheorem*{lemma*}{\protect\LemmaName}
\newtheorem*{notation*}{\protect\NotationName}
\newtheorem*{problem*}{\protect\ProblemName}
\newtheorem*{proposition*}{\protect\PropositionName}
\newtheorem*{solution*}{\protect\SolutionName}
\newtheorem*{summary*}{\protect\SummaryName}
\newtheorem{theorem}{\protect\TheoremName}[section]
\newtheorem{definition}[theorem]{\protect\DefinitionName}
\newtheorem{lemma}[theorem]{\protect\LemmaName}
\newtheorem{proposition}[theorem]{\protect\PropositionName}
\theoremstyle{definition}
\newtheorem*{example*}{\protect\ExampleName}
\newtheorem*{exercise*}{\protect\ExerciseName}
\newtheorem*{remark*}{\protect\RemarkName}
\newtheorem{remark}[theorem]{\protect\RemarkName}
\newtheorem*{question*}{\protect\QuestionName}
\newtheorem*{questions*}{\protect\QuestionsName}
\newcommand{\TheoremName}{}
\newcommand{\AcknowledgmentName}{}
\newcommand{\AlgorithmName}{}
\newcommand{\AssumptionName}{}
\newcommand{\AssumptionsName}{}
\newcommand{\QuestionName}{}
\newcommand{\QuestionsName}{}
\newcommand{\AxiomName}{}
\newcommand{\ConditionName}{}
\newcommand{\ConditionsName}{}
\newcommand{\CaseName}{}
\newcommand{\ClaimName}{}
\newcommand{\ConjectureName}{}
\newcommand{\ConclusionName}{}
\newcommand{\CorollaryName}{}
\newcommand{\CriterionName}{}
\newcommand{\DefinitionName}{}
\newcommand{\LemmaName}{}
\newcommand{\NotationName}{}
\newcommand{\ProblemName}{}
\newcommand{\PropositionName}{}
\newcommand{\SolutionName}{}
\newcommand{\SummaryName}{}
\newcommand{\ExampleName}{}
\newcommand{\RemarkName}{}
\newcommand{\ExerciseName}{}
  \renewcommand{\TheoremName}{Theorem}
  \renewcommand{\AcknowledgmentName}{Acknowledgments}
  \renewcommand{\AlgorithmName}{Algorithm}
  \renewcommand{\AssumptionName}{Assumption}
  \renewcommand{\AssumptionsName}{Assumptions}
  \renewcommand{\QuestionName}{Question}
  \renewcommand{\QuestionsName}{Questions}
  \renewcommand{\AxiomName}{Axiom}
  \renewcommand{\ConditionName}{Condition}
  \renewcommand{\ConditionsName}{Conditions}
  \renewcommand{\CaseName}{Case}
  \renewcommand{\ClaimName}{Claim}
  \renewcommand{\ConjectureName}{Conjecture}
  \renewcommand{\ConclusionName}{Conclusion}
  \renewcommand{\CorollaryName}{Corollary}
  \renewcommand{\CriterionName}{Criterion}
  \renewcommand{\DefinitionName}{Definition}
  \renewcommand{\LemmaName}{Lemma}
  \renewcommand{\NotationName}{Notation}
  \renewcommand{\ProblemName}{Problem}
  \renewcommand{\PropositionName}{Proposition}
  \renewcommand{\SolutionName}{Solution}
  \renewcommand{\SummaryName}{Summary}
  \renewcommand{\ExampleName}{Example}%
  \renewcommand{\ExerciseName}{Exercise}%
  \renewcommand{\RemarkName}{Remark}%
  \renewcommand{\TheoremName}{Theorem}
  \renewcommand{\AcknowledgmentName}{Anerkennungen}
  \renewcommand{\AlgorithmName}{Algorithmus}
  \renewcommand{\AssumptionName}{Voraussetzung}
  \renewcommand{\AssumptionsName}{Voraussetzungen}
  \renewcommand{\AxiomName}{Axiom}
  \renewcommand{\ConditionName}{Bedingung}
  \renewcommand{\ConditionsName}{Bedingungen}
  \renewcommand{\CaseName}{Fall}
  \renewcommand{\ClaimName}{Behauptung}
  \renewcommand{\ConjectureName}{Vermutung}
  \renewcommand{\ConclusionName}{Folgerung}
  \renewcommand{\CorollaryName}{Korollar}
  \renewcommand{\CriterionName}{Kriterium}
  \renewcommand{\DefinitionName}{Definition}
  \renewcommand{\LemmaName}{Lemma}
  \renewcommand{\NotationName}{Notation}
  \renewcommand{\ProblemName}{Problem}
  \renewcommand{\PropositionName}{Satz}
  \renewcommand{\SolutionName}{L\"osung}
  \renewcommand{\SummaryName}{Zusammenfassung}
  \renewcommand{\ExampleName}{Beispiel}%
  \renewcommand{\ExerciseName}{\"Ubung}%
  \renewcommand{\RemarkName}{Bemerkung}%
  \renewcommand{\TheoremName}{Je\'wrhma}
  \renewcommand{\AcknowledgmentName}{Euqarist\'ies}
  \renewcommand{\AlgorithmName}{Alg\'orijmos}
  \renewcommand{\AssumptionName}{Up\'ojesh}
  \renewcommand{\AssumptionsName}{Upoj\'eseis}
  \renewcommand{\AxiomName}{Ax\'iwma}
  \renewcommand{\ConditionName}{Pro\"up\'ojesh}
  \renewcommand{\ConditionsName}{Pro\"upoj\'eseis}
  \renewcommand{\CaseName}{Per\'iptwsh}
  \renewcommand{\ClaimName}{Isqurism\'os}
  \renewcommand{\ConjectureName}{Eikas\'ia}
  \renewcommand{\ConclusionName}{Sump\'erasma}
  \renewcommand{\CorollaryName}{P\'orisma}
  \renewcommand{\CriterionName}{Krit\'hrio}
  \renewcommand{\DefinitionName}{Orism\'os}
  \renewcommand{\LemmaName}{L\'hmma}
  \renewcommand{\NotationName}{Shmeiograf\'ia}
  \renewcommand{\ProblemName}{Pr\'oblhma}
  \renewcommand{\PropositionName}{Pr\'otash}
  \renewcommand{\SolutionName}{L\'ush}
  \renewcommand{\SummaryName}{Per\'ilhyh}
  \renewcommand{\ExampleName}{Par\'adeigma}%
  \renewcommand{\ExerciseName}{\'Askhsh}%
  \renewcommand{\RemarkName}{Parat\'hrhsh}%
\renewenvironment{proof}[1][\proofname]{\par
\pushQED{\hfill$\blacksquare$}%
\normalfont \topsep6\p@\@plus6\p@\relax
\trivlist
\item\relax
{\bfseries
#1\@addpunct{.}}\hspace\labelsep\ignorespaces
}{%
\popQED\endtrivlist\@endpefalse
}
\newcommand{\strongly}{%
  \mathrel{
    \resizebox{1.4em}{0.88ex}{$\rightarrow$}
}}
\newcommand{\wstar}{%
  \mathrel{\vbox{\offinterlineskip\ialign{%
    \hfil##\hfil\cr
    $\scriptstyle*\,$\cr
    \noalign{\kern 0.12ex}
    \resizebox{1.4em}{0.88ex}{$\rightharpoonup$}\cr
}}}}
\newcommand{\twoweakly}{%
  \mathrel{\vbox{\offinterlineskip\ialign{%
    \hfil##\hfil\cr
    $\scriptstyle2\,$\cr
    \noalign{\kern 0.12ex}
    \resizebox{1.4em}{0.88ex}{$\rightharpoonup$}\cr
}}}}
\newcommand{\twowstar}{%
  \mathrel{\vbox{\offinterlineskip\ialign{%
    \hfil##\hfil\cr
    $\scriptstyle2*\,$\cr
    \noalign{\kern 0.12ex}
    \resizebox{1.4em}{0.88ex}{$\rightharpoonup$}\cr
}}}}
\newcommand{\twostrongly}{%
  \mathrel{\vbox{\offinterlineskip\ialign{%
    \hfil##\hfil\cr
    $\scriptstyle2\,$\cr
    \noalign{\kern 0.12ex}
    \resizebox{1.4em}{0.88ex}{$\rightarrow$}\cr
}}}}
\def\XXint#1#2#3{{\setbox0=\hbox{$#1{#2#3}{\int}$ }\hspace{0.17em}
\vcenter{\hbox{$#2#3$ }}\kern-.585\wd0}}
\newcommand{\lnorm}{\left\|}
\newcommand{\rnorm}{\right\|}
\newcommand{\defeq}{\mathrel{\hspace{-0.1ex}\mathrel{\rlap{\raisebox{0.3ex}{$\m@th\cdot$}}\raisebox{-0.3ex}{$\m@th\cdot$}}\hspace{-0.73ex}\resizebox{0.55em}{0.84ex}{$=$}\hspace{0.67ex}}\hspace{-0.25em}}
\newcommand{\eqdef}{\hspace{-0.25em}\mathrel{\hspace{0.67ex}\resizebox{0.55em}{0.84ex}{$=$}\hspace{-0.73ex}\mathrel{\rlap{\raisebox{0.3ex}{$\m@th\cdot$}}\raisebox{-0.3ex}{$\m@th\cdot$}}\hspace{-0.1ex}}}
\newcommand{\refpart}[2]{\hyperref[#1]{\ref*{#1}--\textit{#2.}}}
\newcommand{\N}{\mathbb{N}}
\newcommand{\R}{\mathbb{R}}
\newcommand{\C}{\mathbb{C}}
\begin{document}
\selectlanguage{english}

\title{On 3d dipolar Bose-Einstein condensates involving quantum fluctuations and three-body interactions}
\author{Yongming Luo\thanks{Institut f\"{u}r Mathematik, Universit\"{a}t Kassel, 34132 Kassel, Germany}
\ and \
Athanasios Stylianou$^*$
}
\maketitle

\begin{abstract}
We study the following nonlocal mixed order Gross-Pitaevskii equation
$$i\,\partial_t \psi=-\frac{1}{2}\,\Delta \psi+V_{ext}\,\psi+\lambda_1\,|\psi|^2\,\psi+\lambda_2\,(K*|\psi|^2)\,\psi+\lambda_3\,|\psi|^{p-2}\,\psi,$$
where $K$ is the classical dipole-dipole interaction kernel, $\lambda_3>0$ and $p\in(4,6]$; the case $p=6$ being energy critical. For $p=5$ the equation is considered  currently as the state-of-the-art model for describing the dynamics of dipolar Bose-Einstein condensates (Lee-Huang-Yang corrected dipolar GPE). We prove existence and nonexistence of standing waves in different parameter regimes; for $p\neq 6$ we prove global well-posedness and small data scattering.
\end{abstract}

\footnotetext[1]{\textbf{Keywords:} dipolar Bose-Einstein condensates, nonlocal Gross-Pitaevskii equation, concentration-compactness}
\footnotetext[2]{\textbf{2010 AMS Subject Classification:} 35Q55, 49J35, 35B09}

\section{Introduction}
The static and dynamic properties of a Bose-Einstein condensate (BEC) can be studied through an effective mean field equation known as the \textit{Gross-Pitaevskii equation} (GPE)
\begin{equation}
 \label{GPE} i\hbar\,\partial_t \Psi=-\frac{\hbar^2}{2m}\,\Delta \Psi+g\,|\Psi|^2\,\Psi+V_{ext}\Psi,
\end{equation}
a variant of the famous nonlinear Schr\"odinger equation. Here $\Psi$ is the BEC wavefunction, $V_{ext}$ is an external potential needed to keep the BEC in place (the trapping potential), $|g| = 4\pi\hbar^2 N |a|/m$, $N$ is the total number of particles in the condensate, $m > 0$ denotes the mass of a particle and $a\in\mathbb R$ its corresponding scattering length. The latter can be tuned to be either positive or negative, corresponding to an repulsive (defocusing) or attractive (focusing) quantum pressure. Moreover, the wavefunction $\Psi$ is normalized so that $\lnorm \Psi(t)\rnorm_2=1$ for all $t$. This is the classical model for BECs; for more details on mean field theory see for example \cite{KevrekidisEtAl2015} and references therein.

BECs made of dipolar (i.e. highly magnetic) atoms (e.g. chromium, dysprosium, erbium etc) were first created in the mid 2000's by the group of T. Pfau in Stuttgart. For such gases, a dipole-dipole interaction between the atoms becomes important. This action is long ranged and anisotropic and gives rise to a rich array of new phenomena (\cite{LahayeEtAl2009}). However, recent observations have been made (\cite{KadauEtAl2016,SchmittEtAl2016}) during experiments with dysprosium, not accounted for by the standard mean field theory corresponding to \eqref{GPE}. These experiments produced a stable droplet crystal, similar to ones observed in classical ferrofluids. In contrast to the observation, mean field theory predicted the collapse of these droplets to extremely high densities. It was then first suggested that the repulsive dipolar interaction is responsible for the stabilization of the condensate. However, mean field calculations have shown that this is not the case, i.e. adding a dipolar interaction term does not necessarily stabilize the condensate (for more details see \cite{KadauEtAl2016} and references within).

Thus it was suggested to modify \eqref{GPE} by adding a nonlocal (convolution integral) term and a nonlinear higher order term, respectively modelling the long range dipole-dipole interactions and the beyond mean field quantum fluctuations (the so-called Lee-Huang-Yang correction); see also \cite{BissetEtAl2016,Malomed2018} and references therein.

The \textit{extended dipolar Gross-Pitaevskii equation} (edGPE) reads:
\begin{equation}
 \label{Dimensional_edGPE} i\hbar\,\partial_t \Psi=-\frac{\hbar^2}{2m}\,\Delta \Psi+g\,|\Psi|^2\,\Psi+V_{ext}\Psi+(V_{dip}*|\Psi|^2)\,\Psi+g_{p}\,|\Psi|^{p-2}\,\Psi,
\end{equation}
where the potential $V_{dip}:\mathbb R^3\strongly \mathbb R$ describes the dipole-dipole interaction. We consider only the case $g_p>0$. For $p=5$ we obtain the Lee-Huang-Yang correction; the Lee-Huang-Yang coefficient $g_{5}$ is then always positive (see \cite{Malomed2018}). The case $p=6$ corresponds to the energy critical case and, with a positive coefficient $g_6$, may describe short-range conservative three-body interactions (see \cite{Blakie2016}); the case $g_6<0$ models three-body losses (\cite{MetzEtAl2009}) and due to its high complexity lies out of the scope of this paper.

This type of pattern formation is a very interesting phenomenon: similar to the so-called Rosensweig instability of ferrofluids (see  e.g. \cite{GrovesLloydEtAl2017,PariniStylianou2018} and references therein), it appears in a system as a stable state. On the other hand, it has been mostly pattern formation at systems driven far from equilibrium (e.g. Rayleigh-B\'enard convection, Taylor-Couette flow or current instabilities) that has been the usual case of study (\cite{RichterLange2009}).

Moreover, after experimental observations (\cite{SchmittEtAl2016}) there has been numerical evidence (\cite{BaillieEtAl2016}; concerning edGPE theory described above) that the aforementioned patterns remain stable even after the trapping potential is turned off. This is another surprising feature that is not present in the classical GPE theory and motivates the setting in this paper.

Here is a summary of our results: for $p\neq6$ we prove that the stabilizing effect of the highest order term (the Lee-Huang-Yang correction for $p=5$) is indeed very strong, so that \eqref{Dimensional_edGPE} is well-posed in $H^1(\mathbb R^3;\C)$, i.e. it possesses a unique (up to invariances) global in time solution, that scatters for small initial data. Moreover, for $p\in(4,6]$, we prove the existence of a parameter regime where the solutions are standing waves; we prove that standing waves do not exist outside this regime. It is quite cumbersome to give an explicit definition of this regime, however, we are able to give some estimates.

Local well-posedness for the time-dependent problem is proven by a standard fixed-point argument using Strichartz estimates (Kato's method). We then give uniform in time bounds for the local solution to extend it to the whole real line. For the energy critical case ($p=6$) it is well-known that one cannot proceed this way (see e.g. \cite{TaoVisanEtAl2007} and references therein); this will be an object of future research. Scattering is proven as in the cubic NLS case (proving boundedness of some Strichartz admissible norm) taking into consideration that the nonlocal term defines a Calder\'on-Zygmund operator.

Standing waves are found as critical points of the energy. We show that the latter is bounded below on any $L^2$-sphere
$$S(c)\defeq\big\{u\in H^1(\mathbb R^3;\C):\lnorm u\rnorm_2^2=c\big\}$$
of radius $\sqrt{c}$, so that we look for minimizers; the physical case corresponding to $c=1$. We show that the infimum $\gamma(c)$ of the energy on $S(c)$ is non-positive and that minimizers exist for all $c>c_b\defeq \max\{c>0:\gamma(c)=0\}$. On the other hand we show that for all $c<c_b$ minimizers do not exist; the case $c=c_b$ remains unclear. We also give upper and lower bounds for $c_b$.

The nonlocal term does not possess the full $SO(3)$ symmetries and is not monotone with respect to symmetric rearrangements. Thus the problem is lacking compactness, i.e., we are unable to apply Strauss' embedding theorem (\cite{Strauss1977}) and are forced to deploy a concentration-compactness argument. The reason for considering the problem on $S(c)$ and not on $S(1)$ lies in the fact that we cannot exclude dichotomy on $S(1)$ and we are led to study the subadditivity property of the mapping $c\mapsto \gamma(c)$. This we are also not able to prove directly but proceed as follows: we prove strict monotonicity by studying trajectories from $S(c_1)$ to $S(c_2)$ and then use a reflection argument similar to the one in \cite{Maris2016} together with some nonlocal identities taken from \cite{LopesMaris2008} to prove concavity. Our method is applicable for all $p\in (4,6]$, assuming that the highest order term is repulsive (defocusing).

Finally we would like to mention a number of works that have studied the dipolar GPE without the LHY-correction term (up to our knowledge this work is the first rigorous study of edGPE). That case differs from ours, since the dipolar term competes with the NLS term and yields an explicitly defined stable and unstable (blow-up of local solutions) parameter regime. A well-posedness theory and some dimension reduction results were first proven in \cite{Carles2008}. The threshold of global existence and finite time blow up in the focusing case was studied in \cite{MaCao2011}. Existence of solitary waves via a Weinstein-type scaling invariant functional was proven in \cite{AntonelliSparber2011}. Dimension reduction, ground states and dynamical properties of a condensate in anisotropic confinement were the subject of \cite{BaoEtAl2012}. A sharp blowup threshold was given in \cite{MaWang2013}, the case of a dipolar GPE system was studied in \cite{LiuEtAl2016}. Stability of standing waves and their symmetry and orbits was studied in \cite{CarlesHajaiej2015}. Standing waves in the unstable regime, scattering and stability were studied in \cite{BellazziniJeanjean2016}. More dimension reduction results, including cigar-shaped traps are found in \cite{BaoEtAl2017}. Standing waves that concentrate around local minima of the trapping potential were constructed in \cite{HeLuo2019}. Finally, a rigorous derivation from many-body quantum mechanics was done in \cite{Triay2018}.

\section{The extended dipolar Gross-Pitaevskii equation}
We study the equation in the following dimensionless form:
\begin{equation}\tag{edGPE}
 \label{edGPE} i\,\partial_t \psi=-\frac{1}{2}\,\Delta \psi+V_{ext}\,\psi+\lambda_1\,|\psi|^2\,\psi+\lambda_2\,(K*|\psi|^2)\,\psi+\lambda_3\,|\psi|^{p-2}\,\psi,\quad x\in\mathbb R^3,\ t>0,
\end{equation}
where $p\in(4,6]$, $K$ is a convolution kernel, $\lambda_1,\lambda_2,\lambda_3$ are given real constants. In particular, we consider
\begin{equation*}
K(x)=\frac{1-3\cos^2\theta(x)}{|x|^3},
\end{equation*}
where $\theta(x)$ is the angle between $x\in\mathbb R^3$ and a given (fixed) dipole axis $n\in\mathbb R^3$  with $|n| = 1$, i.e.,
\begin{equation*}
 \cos\theta(x) =\frac{x\cdot n}{|x|}.
\end{equation*}
We assume that the applied magnetic field is parallel to the $x_3$-axis, i.e., $n\defeq (0,0,1)$, so that
\begin{equation*}
 \label{K} K(x)= \frac{x_1^2+x_2^2-2x_3^2}{|x|^5}.
\end{equation*}
If we use the Fourier transform
\begin{equation*}
 \mathcal{F}(f)(\xi)=\widehat{f}(\xi)\defeq \int_{\R^3}f(x)e^{-ix\cdot \xi}\;dx
\end{equation*}
on $K$, we get
\begin{equation}\label{rangeKhat}
\widehat{K}(\xi)=\frac{4\pi}{3}\,\frac{2\xi_3^2-\xi_1^2-\xi_2^2}{|\xi|^2}\in\Big[-\frac{4}{3}\pi,\frac{8}{3}\pi\Big];
\end{equation}
see \cite[Lemma 2.3]{Carles2008}. When it comes to the trap, we consider two cases: either $V_{ext}=0$ (the ``self-bound'' case) or a potential well:
\begin{equation}\label{Vext}
\begin{aligned}
 &V_{ext}\in C^\infty(\R^3)\ \text{ and }\  \partial_\alpha V_{ext}\in L^\infty(\R^3)\ \text{ for all }\alpha\in \mathbb N^3,\ |\alpha|\geq 2,\\
 &\text{ such that }\ |x|\strongly+\infty \Rightarrow V_{ext}(x)\strongly+\infty.
\end{aligned}
\end{equation}
A typical trap is set with a harmonic potential:
\begin{equation*}
 V_{ext}(x)\defeq \frac{\omega_1^2}{\omega_3^2}\, x_1^2+\frac{\omega_2^2}{\omega_3^2}\,x_2^2+x_3^2,
\end{equation*}
where $\omega_1,\omega_2,\omega_3$ are the frequencies of the trap, in the $x_1,x_2,x_3$-directions respectively. Since the ``self-bound'' case seems to be the most technically challenging, we will present the proofs for the case $V_{ext}=0$. In the last section of the paper we will comment on and partially prove results for the case $V_{ext}\neq 0$.

Equation \eqref{edGPE} possesses a dynamically conserved energy functional $E$, which, for $u:\mathbb R\strongly \mathbb C$, is formally defined by
\begin{equation}
 \label{energy} E(u)\defeq \int_{\mathbb R^3}\Big\{\frac{1}{2}|\nabla u|^2+V_{ext}\,|u|^2+\frac{\lambda_1}{2}\,|u|^4+\frac{\lambda_2}{2}\,\big(K*|u|^2\big)\,|u|^2+\frac{2}{p}\,\lambda_3\,|u|^p\Big\}\;dx.
\end{equation}
With the help of Parseval's identity the latter becomes
\begin{equation*}
 \label{energy1} E(u)=\frac{1}{2}\|\nabla u\|_2^2+\|V_{ext}\, |u|^2\|_1+ \frac{1}{2}\frac{1}{(2\pi)^3}\,\int_{\R^3}\big(\lambda_1+\lambda_2\,\widehat{K}(\xi)\big)\,\big|\widehat{|u|^2}(\xi)\big|^2\;d\xi+\frac{2}{p}\lambda_3\,\|u\|_p^p.\\
\end{equation*}
For an arbitrary $c>0$, we look for ground states of \eqref{energy}, that is, for functions $u\in H^1(\mathbb R;\mathbb C)$ such that $\lnorm u\rnorm^2_{2}=c$, that are critical points of $E$ and study their qualitative properties. Note that a ground or excited state of $E$ corresponds to standing waves for \eqref{edGPE} through the Ansatz $\psi(x,t)=e^{-i\,\beta\,t}\,u(x)$; $\beta$ denotes the so-called \textit{chemical potential}. After making the standing wave Ansatz in \eqref{edGPE}, the problem reduces into finding a function $u:\R^3\strongly\C$ satisfying the side constraint $\lnorm u\rnorm^2_{2}=c$ and a number $\beta\in\R$ such that $(u,\beta)$ satisfies the \textit{Standing Wave extended dipolar Gross-Pitaevskii Equation}:
\begin{equation}\label{solution}\tag{SWedGPE}
-\frac{1}{2}\Delta u+V_{ext}\,u +\lambda_1|u|^2u+\lambda_2 (K*|u|^2)u+\lambda_3|u|^{p-2}u+\beta u=0.
\end{equation}

The rescaling we used (the same as in \cite{BellazziniJeanjean2016}) is such, that $c=1$ corresponds to the physical problem. The reason for studying the equation for a general $c>0$ is of technical nature and becomes apparent later in the paper (a brief explanation was given in the introduction).
\begin{definition}\label{definition of notations}
We will make extensive use of the following quantities:
\begin{align*}
A(u){}&\defeq \|\nabla u\|_2^2,\\[0.5em]
B(u){}&\defeq \frac{1}{(2\pi)^3}\,\int_{\R^3}\big(\lambda_1+\lambda_2\,\widehat{K}(\xi)\big)\,\big|\widehat{|u|^2}(\xi)\big|^2\;d\xi,\\[0.5em]
C(u){}&\defeq \lambda_3\|u\|_p^p,\\[0.5em]
Q(u){}& \defeq A(u)+\frac{3}{2}B(u)+\frac{3p-6}{p}C(u),\\
\Xi {}&\defeq \frac{1}{(2\pi)^3}\max\bigg\{\Big|\lambda_1-\lambda_2\frac{4\pi}{3}\Big|,\Big|\lambda_1+\lambda_2\frac{8\pi}{3}\Big|\bigg\}.
\end{align*}
\end{definition}
Throughout the paper we make the following banal assumption:
\begin{equation}
 \label{nondegeneracy} \tag{nondegeneracy}\lambda_1, \lambda_2\text{ do not vanish simultaneously, so that }\Xi\neq 0.
\end{equation}
\begin{remark}
 The ``virial functional'' $Q$ is closely related to the Pohozaev identity. It is defined as such, so that critical points will satisfy $Q(u)=0$ (Proposition \ref{betaneq0}).
\end{remark}
\begin{remark}
Due to \eqref{rangeKhat}, we have $|\lambda_1+\lambda_2 \widehat{K}(\xi)|\leq \Xi$ for all $\xi\in\R^3$. This is an optimal inequality, since it becomes an equality (with plus or minus sign) for $\lambda_1$, $\lambda_2$ having the same sign and $\widehat K(\xi)=-4\pi/3$ or $\widehat K(\xi)=8\pi/3$. We thus have the following optimal estimate
\begin{equation}
 \label{B_estimate} |B(u)|\leq \Xi\,\lnorm u\rnorm_4^4, \text{ for all }\lambda_1,\lambda_2\in \mathbb R \text{ and } u\in L^2(\mathbb R^3)\cap L^4(\mathbb R^3).
\end{equation}
\end{remark}
\begin{remark}
 With the above definitions (and with $V_{ext}=0$) the following identity holds:
  \begin{equation*}
   E(u)=\frac{1}{2}A(u)+\frac{1}{2}B(u)+\frac{2}{p}C(u)
  \end{equation*}
\end{remark}

\begin{remark}
 In the experiments, the dipole-dipole interaction can be tuned to be either attracting ($\lambda_2< 0$) or repulsive ($\lambda_2>0$); see \cite{GiovanazziEtAl2002}.
\end{remark}
We point out that the Laplacian $-\Delta:H^{s+2}(\R^3)\strongly H^s(\R^3)$ is well-defined for all $s\in\R$ (see for instance \cite[Theorem 3.41, p.71]{Abels2012}). On the other hand, the embedding $H^1(\R^3)\subset L^p(\mathbb R^3)$ for $p\in[2,6]$ and the continuity of the convolution operator with kernel $K$ in $L^p(\mathbb R^3)$ (\cite[Lemma 2.1]{Carles2008}) allows for the following (standard) definitions:
\begin{definition}
\begin{enumerate}
 \item Let $I\subseteq \mathbb R$ be an interval with $0\in I$ and $\psi_0\in  H^1(\R^3;\C)$. We call $\psi\in C\big(I; H^1(\R^3;\C)\big)\cap C^{1}\big(I; H^{-1}(\R^3;\C)\big)$ a strong solution to \eqref{edGPE} with initial value $\psi_0$, if \eqref{edGPE} is satisfied in $H^{-1}(\R^3;\C)$ for all $t\in\mathbb R$ and $\psi(0)=\psi_0$; in particular, if $I=\R$ we call the solution global.
 \item We call $(u,\beta)\in H^1(\R^3;\C)\times \R$ a solution to \eqref{solution}, if the latter is satisfied in $H^{-1}(\R^3;\C)$ (with no side constraints).
\end{enumerate}
\end{definition}

Solutions to \eqref{solution} will be constructed as critical points of the energy $E$ in the constraint set
\begin{equation}\label{definition of sets}
 S(c)\defeq \Big\{u\in \Sigma:\lnorm u\rnorm^2_{2}=c\Big\},
\end{equation}
where
\begin{equation}
 \Sigma\defeq \left\{
 \begin{aligned}
  &H^1(\R^3;\C), {}&&\text{ for } V_{ext}=0,\\
  &\big\{u\in H^1(\R^3;\C):V_{ext}\,|u|^2\in L^1(\R^3)\big\}, {}&& \text{ for }V_{ext}\text{ as in \eqref{Vext}}.
 \end{aligned}
 \right.
\end{equation}
The space $\Sigma$ is then a Banach space equipped with the norm
$$\|u\|_{\Sigma}=\|u\|_{H^1}+\|V_{ext}\,|u|^2\|^{\frac{1}{2}}_1;$$
see also \cite[Chapter 9.2]{Cazenave2003}. The following (non-)compactness result is then standard (see for instance \cite[Lemma 3.1]{Zhang2000}), since the trap is assumed to be coercive. It is the reason which makes the case $V_{ext}=0$ more challenging.
\begin{lemma}\label{compactness trapping}
The space $\Sigma$ is continuously embedded to $L^p(\R^3;\C)$ for all $p\in[2,6]$. If $V_{ext}\neq 0$, then the embedding is compact for $p\in[2,6)$. If $V_{ext}=0$ then $\Sigma$ is not compactly embedded in any $L^p$.
\end{lemma}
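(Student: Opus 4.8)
The plan is to treat the three assertions separately, the first and third being essentially immediate and the second carrying all the (standard) content. For the continuous embedding I would simply note that $\Sigma\subseteq H^1(\R^3;\C)$ with $\|u\|_{H^1}\le\|u\|_\Sigma$ by construction, so that composing this with the Sobolev embedding $H^1(\R^3)\hookrightarrow L^p(\R^3)$, valid for every $p\in[2,6]$, yields $\Sigma\hookrightarrow L^p(\R^3;\C)$ with norm controlled by $\|\cdot\|_\Sigma$.

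For the compactness when $V_{ext}$ is as in \eqref{Vext}, I would take a bounded sequence $(u_n)\subseteq\Sigma$, say $\|u_n\|_\Sigma\le M_0$. It is then bounded in $H^1(\R^3;\C)$, so along a subsequence $u_n\weakly u$ in $H^1$, $u_n\to u$ in $L^p_{loc}(\R^3)$ for every $p\in[2,6)$ by Rellich--Kondrachov, $u_n\to u$ a.e., and $\sup_n\|u_n\|_6<\infty$. The only phenomenon to rule out is escape of mass to infinity, which is exactly where coercivity of $V_{ext}$ enters: given $\varepsilon>0$ one chooses $R>0$ so large that $V_{ext}\ge\varepsilon^{-1}>0$ on $\{|x|\ge R\}$, whence
\[
\int_{|x|\ge R}|u_n|^2\,dx\le\varepsilon\int_{|x|\ge R}V_{ext}\,|u_n|^2\,dx\le\varepsilon\,\|V_{ext}\,|u_n|^2\|_1\le\varepsilon\,M_0^2,
\]
and the same bound holds for the limit $u$ by Fatou's lemma. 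Interpolating between $L^2$ and $L^6$ on $\{|x|\ge R\}$ and using the uniform $L^6$-bound gives $\|u_n-u\|_{L^p(\{|x|\ge R\})}\le C\,\varepsilon^{\theta(p)}$ for $p\in[2,6)$, where $\theta(p)>0$ solves $\tfrac1p=\tfrac{\theta}{2}+\tfrac{1-\theta}{6}$, while on $\{|x|<R\}$ Rellich--Kondrachov gives $\|u_n-u\|_{L^p(\{|x|<R\})}\to0$. Letting $\varepsilon\to0$ then yields $u_n\to u$ in $L^p(\R^3;\C)$, which is the claimed compactness.

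For the non-compactness when $V_{ext}=0$, where $\Sigma=H^1(\R^3;\C)$, I would exploit translation invariance: fix any $w\in H^1(\R^3;\C)\setminus\{0\}$ and $(y_n)\subseteq\R^3$ with $|y_n|\to\infty$ and set $u_n\defeq w(\cdot-y_n)$. Then $\|u_n\|_{H^1}=\|w\|_{H^1}$ is bounded while $u_n\weakly0$ in $H^1$, hence also weakly in $L^p$, so the only possible strong $L^p$-limit of a subsequence would be $0$; but $\|u_n\|_p=\|w\|_p\neq0$ for every $n$, a contradiction. Hence no subsequence converges in $L^p$, for any $p$ for which the embedding is defined.

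The single step that needs care is the tail estimate in the compactness part, together with the observation that the interpolation exponent $\theta(p)$ degenerates to $0$ at $p=6$: there $L^6$ is only the endpoint of the interpolation, the scaling (concentration) behaviour can no longer be absorbed, and this is precisely why $p=6$ must be excluded from the compact embedding even in the presence of a confining potential. Everything else is routine.
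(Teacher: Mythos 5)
Your proof is correct. The paper does not prove this lemma at all---it simply cites \cite[Lemma 3.1]{Zhang2000} as a standard fact---and your argument (continuity via $\|u\|_{H^1}\le\|u\|_\Sigma$ and Sobolev, compactness via the coercivity tail estimate combined with Rellich--Kondrachov and $L^2$--$L^6$ interpolation for $p<6$, and non-compactness for $V_{ext}=0$ via translates converging weakly to zero) is exactly the standard proof that citation stands for.
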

Moreover, for a more detailed exposition on the geometry of $S(c)$ as a Finsler manifold we refer to \cite{Berestycki1983} and references therein.

Finally, we define the infimum function $\gamma:[0,\infty)\strongly\mathbb R$ by
\begin{equation}
 \gamma(c)\defeq\inf_{u\in S(c)}E(u)\label{gamma_c}
\end{equation}
for $c>0$ and $\gamma(0)\defeq 0$, the (possibly infinite) number
\begin{equation}
 c_b\defeq\sup\{c>0:\gamma(c)=0\}\label{c_b}
\end{equation}
(depending only on $\lambda_1,\lambda_2,\lambda_3,p$).
We will also use optimal Gagliardo-Nirenberg inequalities in $\mathbb R^3$. We write them in the form
\begin{equation}
 \label{G-N}\lnorm u\rnorm_{2\sigma+2}^{2\sigma+2}\leq \mathrm C_{\sigma}^{2\sigma+2}\,\lnorm \nabla u\rnorm_{2}^{3\sigma}\;\lnorm u\rnorm_2^{2-\sigma}.
\end{equation}
The following result holds:
\begin{theorem}[\protect\cite{Weinstein1982}]\label{optimal_G-N}
 The optimal constant for \eqref{G-N} is given by $\mathrm C_\sigma=\left(\frac{\sigma+1}{\lnorm\psi\rnorm_2^{2\sigma}}\right)^{\frac{1}{s\sigma+2}}$, where $\psi$ is the ground state of the equation
 $$-\frac{3\sigma}{2}\Delta \psi+\left(1-\frac{\sigma}{2}\right)\psi+\psi^{2\sigma+1}=0.$$
\end{theorem}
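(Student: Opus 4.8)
The plan is to identify $\mathrm C_\sigma$ with the reciprocal (to the power $2\sigma+2$) of the infimum of the \emph{Weinstein quotient}
$$W(u)\defeq \frac{\|\nabla u\|_2^{3\sigma}\,\|u\|_2^{2-\sigma}}{\|u\|_{2\sigma+2}^{2\sigma+2}},\qquad u\in H^1(\R^3)\setminus\{0\};$$
indeed \eqref{G-N} holds with a constant $\mathrm C$ precisely when $\mathrm C^{2\sigma+2}\geq (\inf W)^{-1}$, so the optimal constant obeys $\mathrm C_\sigma^{2\sigma+2}=(\inf W)^{-1}$. Two preliminary observations: $W$ is invariant under the two-parameter rescaling $u\mapsto\mu\,u(\lambda\,\cdot)$, $\mu,\lambda>0$, and $\inf W>0$ — the latter being exactly the non-sharp Gagliardo--Nirenberg inequality obtained from $H^1(\R^3)\hookrightarrow L^6(\R^3)$ and Hölder interpolation, which uses $2<2\sigma+2<6$, i.e. $\sigma\in(0,2)$ (the endpoint $\sigma=2$, corresponding to the energy critical exponent $p=6$, is genuinely excluded since there \eqref{G-N} degenerates into the Sobolev inequality).

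The first and principal step is to show the infimum is attained. Given a minimizing sequence $(u_n)$, replace each $u_n$ by the Schwarz symmetrization of $|u_n|$: this preserves $\|u_n\|_2$ and $\|u_n\|_{2\sigma+2}$ and does not increase $\|\nabla u_n\|_2$, yielding a minimizing sequence $(u_n^*)$ of nonnegative, radially symmetric, radially decreasing functions. By scaling invariance, normalize so that $\|u_n^*\|_2=\|\nabla u_n^*\|_2=1$; then $(u_n^*)$ is bounded in $H^1_{\mathrm{rad}}(\R^3)$, which embeds compactly into $L^{2\sigma+2}(\R^3)$ by the Strauss radial lemma. Passing to a subsequence, $u_n^*\weakly\psi$ in $H^1$ and $u_n^*\strongly\psi$ in $L^{2\sigma+2}$; since $\|u_n^*\|_{2\sigma+2}^{2\sigma+2}=W(u_n^*)^{-1}\to(\inf W)^{-1}>0$, the limit $\psi$ is nonzero, and weak lower semicontinuity of $\|\nabla\cdot\|_2$ and $\|\cdot\|_2$ gives $W(\psi)\leq\liminf_n W(u_n^*)=\inf W$. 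Hence $\psi$ is a nonnegative radial minimizer.

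Such a minimizer $\psi$ solves the Euler--Lagrange equation for $W$, which a direct computation of the Fréchet derivative shows to have the form $-a\,\Delta\psi+b\,\psi=d\,\psi^{2\sigma+1}$ with $a,b,d>0$ built from $\|\nabla\psi\|_2^2$, $\|\psi\|_2^2$, $\|\psi\|_{2\sigma+2}^{2\sigma+2}$. A further rescaling $\psi\mapsto A\,\psi(B\,\cdot)$ normalizes the three coefficients simultaneously and brings the equation to the form in the statement, $-\tfrac{3\sigma}{2}\Delta\psi+(1-\tfrac\sigma2)\psi=\psi^{2\sigma+1}$; by Kwong's uniqueness theorem this has (up to translation) a unique positive radial solution, which is smooth and exponentially decaying by elliptic regularity — this is the ground state $\psi$ of the statement, and every minimizer of $W$ reduces to it, so $\inf W$ does not depend on the choice of minimizer. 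Pairing the equation with $\psi$ gives the Nehari identity
$$\tfrac{3\sigma}{2}\|\nabla\psi\|_2^2+\big(1-\tfrac\sigma2\big)\|\psi\|_2^2=\|\psi\|_{2\sigma+2}^{2\sigma+2},$$
while pairing with $x\cdot\nabla\psi$ gives the Pohozaev identity in $\R^3$
$$\tfrac{3\sigma}{4}\|\nabla\psi\|_2^2+\tfrac32\big(1-\tfrac\sigma2\big)\|\psi\|_2^2=\tfrac{3}{2\sigma+2}\|\psi\|_{2\sigma+2}^{2\sigma+2};$$
these two linear relations solve to $\|\nabla\psi\|_2^2=\|\psi\|_2^2$ and $\|\psi\|_{2\sigma+2}^{2\sigma+2}=(\sigma+1)\|\psi\|_2^2$.

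Substituting into $W(\psi)$ gives $\inf W=W(\psi)=\dfrac{\|\psi\|_2^{3\sigma}\,\|\psi\|_2^{2-\sigma}}{(\sigma+1)\,\|\psi\|_2^2}=\dfrac{\|\psi\|_2^{2\sigma}}{\sigma+1}$, hence $\mathrm C_\sigma^{2\sigma+2}=(\inf W)^{-1}=\dfrac{\sigma+1}{\|\psi\|_2^{2\sigma}}$, i.e. $\mathrm C_\sigma=\big(\tfrac{\sigma+1}{\|\psi\|_2^{2\sigma}}\big)^{1/(2\sigma+2)}$, which is the asserted formula (with $s=2$ in the stated exponent $\tfrac{1}{s\sigma+2}$). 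The step I expect to be the real obstacle is showing the infimum is attained, i.e. overcoming the translation and dilation non-compactness of minimizing sequences; here it is dispatched cheaply via symmetrization and the compact radial Sobolev embedding, while the uniqueness of the ground state — needed only so that the formula for $\mathrm C_\sigma$ is unambiguous — is a deep but classical fact that may simply be quoted.
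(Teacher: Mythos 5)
Your proposal is correct, but note that the paper offers no proof of this statement at all: it is quoted verbatim (typos included) from the cited reference \cite{Weinstein1982}, so there is nothing internal to compare against. What you write is essentially Weinstein's original argument — minimizing the scale-invariant quotient via Schwarz symmetrization and the compact radial embedding, passing to the Euler--Lagrange equation, rescaling to the normalized ground-state equation, and using the Nehari and Pohozaev identities to evaluate the infimum — and your reading of the statement's two misprints is the right one: the exponent should be $\tfrac{1}{2\sigma+2}$ (the ``$s$'' is a stray ``$2$'') and the ground-state equation should carry $-\psi^{2\sigma+1}$ (equivalently $-\tfrac{3\sigma}{2}\Delta\psi+(1-\tfrac{\sigma}{2})\psi=\psi^{2\sigma+1}$), exactly as you use it; only the range $\sigma\in(0,2)$, which you impose, is needed here since the paper applies the result with $\sigma=1$.
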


\section{Main results}
As already noted, we present the results for the case $V_{ext}=0$; the case of a coercive trap will be treated in the last section. Our first results concern a well-posedness and small data scattering theory for \eqref{edGPE}, not covering the energy critical case (i.e., $p\neq 6$).
\begin{theorem}[Existence]\label{Theorem 2}
Let $V_{ext}=0$, $\lambda_3>0$ and $p\in(4,6)$. Then, for each $\psi_0\in H^1(\R^3;\C)$,  \eqref{edGPE} possesses a unique strong global solution $\psi$ with initial datum $\psi_0$. In particular,
\begin{enumerate}
\item $\psi\in L^\infty\big(\R;H^1(\R^3;\C)\big)$,
\item the particle number and energy conserve, i.e.,
\begin{align*}
\|\psi(t)\|_2^2=\|\psi_0\|_2^2\text{ and }E(\psi(t))=E(\psi_0) \text{ for all $t\in\R$, and}
\end{align*}
\item the initial value problem for \eqref{edGPE} is well-posed in $H^1(\R^3;\C)$.
\end{enumerate}
\end{theorem}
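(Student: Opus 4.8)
The plan is the standard one for an energy-subcritical NLS: (1) local well-posedness in $H^1$ by a Strichartz-based contraction (Kato's method), (2) conservation of mass and energy, (3) a uniform-in-time $H^1$ bound which, via the blow-up alternative, upgrades the local solution to a global one in $L^\infty(\R;H^1)$. The only place where the hypotheses $\lambda_3>0$ and $p>4$ are used is step (3); everything else is routine.

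\textbf{Step 1 (local theory).} Write the nonlinearity as $g(\psi)=\lambda_1|\psi|^2\psi+\lambda_2(K*|\psi|^2)\psi+\lambda_3|\psi|^{p-2}\psi$ and the equation in Duhamel form $\psi(t)=e^{\frac i2 t\Delta}\psi_0-i\int_0^t e^{\frac i2(t-s)\Delta}g(\psi(s))\,ds$. Since $K$ is homogeneous of degree $-3$ with vanishing mean over spheres, $f\mapsto K*f$ is a Calder\'on--Zygmund operator, bounded on every $L^q(\R^3)$ with $1<q<\infty$ and commuting with derivatives; hence the nonlocal term obeys exactly the same multilinear Strichartz estimates as the cubic term $|\psi|^2\psi$, which is $H^1$-subcritical in dimension $3$. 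The power term $|\psi|^{p-2}\psi$ is $C^1$ as a map $\C\to\C$ (because $p>4>3$) and $H^1$-subcritical since $p<6=2^*$. Choosing admissible Strichartz exponents and running the fixed-point argument in a ball of $C([-T,T];H^1)\cap L^{q}([-T,T];W^{1,r})$ produces, for every $\psi_0\in H^1(\R^3;\C)$, a unique local strong solution on a maximal interval $(-T_*,T^*)$ with $T^*=T^*(\|\psi_0\|_{H^1})>0$, depending continuously on $\psi_0$; moreover $\partial_t\psi=\tfrac i2\Delta\psi-ig(\psi)\in C((-T_*,T^*);H^{-1})$ so that $\psi$ is a strong solution in the sense of the definition, and the blow-up alternative holds: if $T^*<\infty$ then $\|\psi(t)\|_{H^1}\to\infty$ as $t\uparrow T^*$ (and symmetrically at $-T_*$).

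\textbf{Step 2 (conservation laws and globalization).} Mass $\|\psi(t)\|_2^2$ and the energy \eqref{energy} are conserved along the flow; this is standard for energy-subcritical $H^1$ solutions (verify it first for $H^2$ data, where the formal computation is justified and $H^2$-regularity is propagated, then pass to the limit for $H^1$ data using the continuous dependence of Step 1). Now fix $\psi_0$ and the corresponding maximal solution $\psi$. With $V_{ext}=0$ the identity $E(u)=\tfrac12 A(u)+\tfrac12 B(u)+\tfrac2p C(u)$ together with energy conservation, the bound \eqref{B_estimate}, and $\lambda_3>0$ gives, for every $t$ in the maximal interval,
\[
\tfrac12\|\nabla\psi(t)\|_2^2 \;=\; E(\psi_0)-\tfrac12 B(\psi(t))-\tfrac2p C(\psi(t)) \;\le\; E(\psi_0)+\tfrac{\Xi}{2}\|\psi(t)\|_4^4-\tfrac{2\lambda_3}{p}\|\psi(t)\|_p^p .
\]
Since $2<4<p$, Lebesgue interpolation yields $\|\psi(t)\|_4\le\|\psi(t)\|_2^{1-\theta}\|\psi(t)\|_p^{\theta}$ with $\theta=\tfrac{p}{2(p-2)}\in(0,1)$, and $p>4$ forces $4\theta=\tfrac{2p}{p-2}<p$; hence by Young's inequality and mass conservation $\|\psi(t)\|_2=\|\psi_0\|_2$,
\[
\tfrac{\Xi}{2}\|\psi(t)\|_4^4 \;\le\; \tfrac{2\lambda_3}{p}\|\psi(t)\|_p^p + C\big(\Xi,\lambda_3,p,\|\psi_0\|_2\big).
\]
Adding the two displays, the $\|\psi(t)\|_p^p$ terms cancel and $\|\nabla\psi(t)\|_2^2\le 2E(\psi_0)+2C(\Xi,\lambda_3,p,\|\psi_0\|_2)\eqdef M<\infty$ uniformly in $t$. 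With mass conservation this bounds $\|\psi(t)\|_{H^1}$ uniformly, so the blow-up alternative forces $T_*=T^*=\infty$ and $\psi\in L^\infty(\R;H^1)$. This gives (i); (ii) is Step 2; and (iii) is the continuous dependence of Step 1, now valid on all of $\R$ thanks to the global a priori bound.

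\textbf{Main obstacle.} Steps 1--2 are routine. The crux is the a priori estimate: the nonlocal contribution $B(u)$ is sign-indefinite and possibly focusing, hence cannot be absorbed by the kinetic term alone (indeed the dipolar GPE without the higher-order term does admit a blow-up regime). What saves us is that the defocusing top-order term $\tfrac2p C(u)=\tfrac{2\lambda_3}{p}\|u\|_p^p$ with $p>4$ dominates $\|u\|_4^4$ after interpolating against the conserved mass --- this is precisely the stabilizing effect of the Lee--Huang--Yang correction announced in the introduction. The restriction $p<6$ is essential in Step 1 as well: the energy-critical case $p=6$ is genuinely different and is left for future work.
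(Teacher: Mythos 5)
Your proposal is correct and follows essentially the same route as the paper: local well-posedness by Kato's method (the paper verifies Cazenave's hypotheses for the nonlocal term via the boundedness of $\widehat{K}$), conservation of mass and energy, and a uniform $H^1$ bound obtained from $E=\tfrac12 A+\tfrac12 B+\tfrac2p C$, the estimate $|B(u)|\leq\Xi\|u\|_4^4$, and the fact that the defocusing term with $p>4$ dominates $\|u\|_4^4$ after interpolation against the conserved mass. The only cosmetic difference is that the paper first bounds $\|\psi(t)\|_4$ uniformly (via Gagliardo--Nirenberg and H\"older, as in \eqref{boundedness u_4}) and then controls the gradient, whereas you absorb $\|\psi(t)\|_4^4$ directly into $\|\psi(t)\|_p^p$ by Young's inequality; both implement the same mechanism.
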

\begin{theorem}[Scattering]\label{scattering}
Let $V_{ext}=0$, $c>0,\,\lambda_3>0$ and $p\in(4,6)$. Then there exists some $\delta>0$ such that, for all $\psi_0\in H^1(\R^3;\C)$ with $\|\psi_0\|_{H^1}<\delta$, exist $\psi_{\pm}\in H^1(\R^3;\C)$ such that, for the unique global solution $\psi$ of \eqref{edGPE} with initial value $\psi_0$ (given by Theorem \ref{Theorem 2}), we have
\begin{align*}
\lim_{t\to\pm\infty}\big\|\psi(t)-e^{it\frac{\Delta}{2}}\psi_{\pm}\big\|_{H^1}=0,
\end{align*}
where $e^{it\frac{\Delta}{2}}$ denotes the unitary semigroup generated by $\frac{\Delta}{2}$.
\end{theorem}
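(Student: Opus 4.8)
The plan is to upgrade the global solution furnished by Theorem~\ref{Theorem 2} so that, for small $H^1$-data, it carries finite \emph{global} space-time (Strichartz) norms; scattering then follows from the Duhamel formula by a standard tail argument. Equivalently, one re-runs the local construction of Theorem~\ref{Theorem 2} but globally in time, using smallness in place of shortness of the time interval. Write the nonlinearity as $\mathcal N(\psi)\defeq\lambda_1|\psi|^2\psi+\lambda_2(K*|\psi|^2)\psi+\lambda_3|\psi|^{p-2}\psi$. All three pieces are $H^1$-subcritical and $L^2$-supercritical in $\R^3$ (the powers are $2$ and $p-2\in(2,4)$, all strictly between $4/3$ and $4$), which is exactly the regime in which small $H^1$-data scatter. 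First I would record the two facts that let the nonlocal and higher-order terms be treated just like the local cubic term: since $\widehat K$ is smooth on $\R^3\setminus\{0\}$, bounded, and homogeneous of degree $0$ (see \eqref{rangeKhat}), the Mikhlin multiplier theorem shows that $f\mapsto K*f$ is bounded on $L^r(\R^3)$ for every $r\in(1,\infty)$ and commutes with $\nabla$; and for $p>4$ one has the pointwise bound $|\nabla(|\psi|^{p-2}\psi)|\lesssim|\psi|^{p-2}|\nabla\psi|$, so no derivative singularity appears.

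Next I would set up the contraction. Fix one or two Schr\"odinger-admissible pairs $(q,r),(\gamma,\rho)$ in $\R^3$ adapted to the nonlinearities in the standard Cazenave--Weissler way (see \cite{Cazenave2003}; for the cubic terms $r=4$, and for $|\psi|^{p-2}\psi$ one takes $\rho=p$, which remains admissible precisely for $p\le 6$), and work in the complete metric space
\begin{equation*}
Y\defeq\Big\{\psi\in C(\R;H^1)\cap L^q(\R;W^{1,r})\cap L^\gamma(\R;W^{1,\rho}):\ \|\psi\|_Y\le 2\,C_0\,\|\psi_0\|_{H^1}\Big\},
\end{equation*}
where $\|\psi\|_Y$ is the largest of the three norms and $C_0$ is the constant in the Strichartz estimates. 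Using Strichartz, H\"older in space and time, the Sobolev embeddings of $W^{1,\cdot}$, the Leibniz/chain rule (licit for $z\mapsto|z|^{p-2}z$ since $p-2>1$), and the $L^r$-boundedness of $K*$, one establishes the nonlinear bound
\begin{equation*}
\big\|\mathcal N(\psi)\big\|_{L^{q'}(\R;W^{1,r'})+L^{\gamma'}(\R;W^{1,\rho'})}\ \lesssim\ \|\psi\|_Y^{3}+\|\psi\|_Y^{\,p-1}
\end{equation*}
together with the matching Lipschitz estimate for $\mathcal N(\psi_1)-\mathcal N(\psi_2)$. Hence the solution map $\Phi(\psi)(t)\defeq e^{it\frac{\Delta}{2}}\psi_0-i\int_0^t e^{i(t-s)\frac{\Delta}{2}}\mathcal N(\psi(s))\,ds$ obeys $\|\Phi(\psi)\|_Y\le C_0\|\psi_0\|_{H^1}+C\big(\|\psi\|_Y^3+\|\psi\|_Y^{p-1}\big)$ and is a contraction on $Y$ as soon as $\|\psi_0\|_{H^1}<\delta$ with $\delta>0$ small. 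Its fixed point coincides, by the uniqueness in Theorem~\ref{Theorem 2}, with the global solution of \eqref{edGPE} with datum $\psi_0$, and it now additionally lies in $L^q(\R;W^{1,r})\cap L^\gamma(\R;W^{1,\rho})$, so that $\mathcal N(\psi)$ lies in the corresponding global dual spaces.

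Finally, scattering: set $\psi_\pm\defeq\psi_0-i\int_0^{\pm\infty}e^{-is\frac{\Delta}{2}}\mathcal N(\psi(s))\,ds$, the integrals converging in $H^1$ because, by the inhomogeneous Strichartz estimate on $[t,\infty)$,
\begin{equation*}
\Big\|e^{-it\frac{\Delta}{2}}\psi(t)-\psi_+\Big\|_{H^1}\ \lesssim\ \big\|\mathcal N(\psi)\big\|_{(L^{q'}W^{1,r'}+L^{\gamma'}W^{1,\rho'})([t,\infty))}\ \longrightarrow\ 0\quad\text{as }t\to+\infty,
\end{equation*}
the right-hand side being the tail of a finite global norm; since $e^{-it\frac{\Delta}{2}}$ is unitary on $H^1$ this gives $\|\psi(t)-e^{it\frac{\Delta}{2}}\psi_+\|_{H^1}\to 0$, and the case $t\to-\infty$ is identical. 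The hard part here is not conceptual but a matter of bookkeeping: one must pick admissible exponents that simultaneously close the nonlinear estimate for all three terms throughout the range $p\in(4,6)$, correctly placing the single derivative on the nonlocal factor (through the multiplier boundedness of $K*$) and on the non-integer power $|\psi|^{p-2}\psi$, and matching the time integrabilities via H\"older. As $p\uparrow 6$ the pair adapted to $|\psi|^{p-2}\psi$ degenerates to the energy-critical endpoint $(2,6)$, which is precisely why the method is confined to $p<6$.
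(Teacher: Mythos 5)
Your proposal is correct and follows essentially the same route as the paper: small $H^1$-data yields finite global Strichartz norms, with the dipolar term treated exactly like the cubic one via the $L^r$-boundedness of the convolution with $K$ (the paper invokes \cite[Lemma 2.1]{Carles2008}/\cite[Theorem 1.4]{BellazziniJeanjean2016} where you invoke Mikhlin), followed by the standard Duhamel tail/Cauchy-in-$H^1$ argument producing $\psi_\pm$. The only difference is procedural: you re-run a global-in-time contraction in a small ball of $C(\R;H^1)\cap L^q W^{1,r}\cap L^\gamma W^{1,\rho}$ and identify the fixed point with the Theorem \ref{Theorem 2} solution by uniqueness, whereas the paper estimates that solution directly, absorbing excess powers into its conserved $L^\infty_t H^1_x$ bound and closing the bound on the single norm $\|\psi\|_{L_t^{4p/(3(p-2))}W_x^{1,p}}$ by a continuity-in-$t$ argument; both implementations are standard and equivalent in this subcritical range $p\in(4,6)$.
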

The next theorem deals with existence and non-existence of standing waves; the energy critical case $p=6$ is included.
\begin{theorem}[Existence of standing waves]\label{Theorem 1.1}
Let $V_{ext}=0$, $c>0,\,\lambda_3>0$ and $p\in(4,6]$.
\begin{enumerate}
 \item Assume that $\lambda_1,\lambda_2$ satisfy either \eqref{case_a1} or \eqref{case_a2}, where
  \begin{align}
   &\lambda_2\geq 0\ \text{ and }\ \lambda_1-\frac{4\pi}{3}\lambda_2\geq 0,\label{case_a1}\\
   &\lambda_2< 0\ \text{ and }\ \lambda_1+\frac{8\pi}{3}\lambda_2\geq 0\label{case_a2}.
  \end{align}
 Then $E(u)>0$ for all $u\in S(c)$ and $\gamma(c)=0$, i.e., $E(u)$ possesses no minimizer on $S(c)$ for all $c\in(0,\infty)$.
 \item Assume that $\lambda_1,\lambda_2$ satisfy either \eqref{case_a3} or \eqref{case_a4}, where
 \begin{align}
  &\lambda_2\geq 0\ \text{ and }\ \lambda_1-\frac{4\pi}{3}\lambda_2< 0\label{case_a3}\\
  &\lambda_2< 0\ \text{ and }\ \lambda_1+\frac{8\pi}{3}\lambda_2< 0\label{case_a4}.
 \end{align}
 Then $c_b=\max\{c>0:\gamma(c)=0\}>0$. In particular, $E(u)>0$ for all $u\in S(c)$ and $E$ possesses no minimizer on $S(c)$ for all $c\in(0,c_b)$. On the other hand, $E$ possesses at least one minimizer $u$ on $S(c)$ for all $c\in(c_b,\infty)$.
\end{enumerate}
\end{theorem}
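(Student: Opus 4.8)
The statement splits into a ``defocusing'' regime (part 1) and a ``focusing'' regime (part 2) of very different difficulty, and I would treat them separately.

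For part 1, under \eqref{case_a1} or \eqref{case_a2} the range \eqref{rangeKhat} of $\widehat K$ forces $\lambda_1+\lambda_2\widehat K(\xi)\ge 0$ for every $\xi$, hence $B(u)\ge 0$ for all $u$; since $\lambda_3>0$ makes $C(u)=\lambda_3\|u\|_p^p\ge 0$, the identity $E(u)=\tfrac12A(u)+\tfrac12B(u)+\tfrac2pC(u)$ gives $E(u)\ge\tfrac12A(u)>0$ on $S(c)$, because a nonzero $H^1(\R^3)$ function has $\|\nabla u\|_2>0$. To see $\gamma(c)=0$ I would fix $u\in S(c)$ and use the mass-preserving dilation $u_t(x):=t^{3/2}u(tx)\in S(c)$: since $\widehat K$ is homogeneous of degree $0$, a change of variables gives $A(u_t)=t^2A(u)$, $B(u_t)=t^3B(u)$, $C(u_t)=t^{3p/2-3}C(u)$, so $E(u_t)\to 0$ as $t\to0^+$ (all three exponents are positive). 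Hence $\gamma(c)\le 0$, and combined with $E>0$ this yields $\gamma(c)=0$ and the absence of a minimizer.

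For part 2 I now have $m:=\inf_\xi(\lambda_1+\lambda_2\widehat K(\xi))<0$. First, $E$ is bounded below on each $S(c)$ and minimizing sequences are $H^1$-bounded: from $|B(u)|\le\Xi\|u\|_4^4$ (see \eqref{B_estimate}), the interpolation $\|u\|_4^4\le\|u\|_2^{4(1-\theta)}\|u\|_p^{4\theta}$ with $4\theta=\tfrac{2p}{p-2}<p$ (this is where $p>4$ enters), and Young's inequality (valid since $4\theta<p$) to control $\tfrac12\Xi\|u\|_4^4$ by $\tfrac2pC(u)$ plus a $c$-dependent constant, one gets $E(u)\ge\tfrac12A(u)-\mathrm{const}(c)$. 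The dilation argument still gives $\gamma(c)\le 0$, $\gamma(0)=0$. The heart of the scaling analysis: for $u\in S(c)$ with $B(u)<0$ (if $B(u)\ge 0$ then $E(u)>0$ at once), minimizing $t\mapsto E(u_t)=\tfrac12t^2A(u)+\tfrac12t^3B(u)+\tfrac2pt^{3p/2-3}C(u)$ over $t>0$ (the exponents satisfy $2<3<\tfrac{3p}{2}-3$, so the minimizer is unique) shows, after a short computation,
\[
\min_{t>0}E(u_t)\ge 0 \quad\Longleftrightarrow\quad |B(u)|^{3p-10}\le K_p\,A(u)^{3p-12}\,C(u)^2
\]
for an explicit constant $K_p>0$, the reverse strict inequality characterizing $\min_tE(u_t)<0$. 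On the other hand, combining the $\sigma=1$ case of \eqref{G-N} with the $L^4$--$L^2$--$L^p$ interpolation above (each raised to a suitable power) gives $\|u\|_4^{4(3p-10)}\le\mathrm{C}'A(u)^{3p-12}C(u)^2c^{2(p-4)}$, hence $|B(u)|^{3p-10}\le\Xi^{3p-10}\mathrm{C}'A(u)^{3p-12}C(u)^2c^{2(p-4)}$. Two consequences. (i) If $c$ is so small that $\Xi^{3p-10}\mathrm{C}'c^{2(p-4)}<K_p$, then $\min_tE(u_t)>0$, hence $E(u)>0$ on all of $S(c)$ and $\gamma(c)=0$: thus $c_b>0$. (ii) Since $|B(\mu u)|^{3p-10}/(A(\mu u)^{3p-12}C(\mu u)^2)=\mu^{4(p-4)}\,|B(u)|^{3p-10}/(A(u)^{3p-12}C(u)^2)$, once one test function $v\ne 0$ with $B(v)<0$ is produced — possible because $\lambda_1+\lambda_2\widehat K$ is negative on an open cone (around $\{\xi_3=0\}$ under \eqref{case_a3}, around the $\xi_3$-axis under \eqref{case_a4}), e.g. for a Fejér-type modulation of a fixed bump whose $\widehat{|v|^2}$ concentrates on that cone — amplitude scaling $\mu v$ with $\mu$ large makes the ratio exceed $K_p$, so $E((\mu v)_t)<0$ for some $t$ and therefore $\gamma(\mu^2\|v\|_2^2)<0$; hence $\gamma(c)<0$ for all large $c$, so $c_b<\infty$.

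The remaining, and genuinely hard, step is that $c\mapsto\gamma(c)$ is continuous, non-increasing, and concave on $[0,\infty)$ — something that cannot be read off the scaling, since for $p>4$ there is no self-similar reduction. I would prove strict monotonicity on $\{\gamma<0\}$ by transporting a near-optimal $u\in S(c_1)$ along an admissible trajectory into $S(c_2)$, $c_2>c_1$, along which $E$ decreases, and concavity by a reflection argument in the spirit of \cite{Maris2016} together with the nonlocal identities of \cite{LopesMaris2008}; these are exactly the tools that handle the loss of full $SO(3)$-symmetry and the failure of rearrangement monotonicity for the dipolar term. Granting this, $\gamma$ is continuous, non-increasing, $\le 0$, $\equiv0$ near $0$ and $<0$ for large $c$, so $\{\gamma=0\}=[0,c_b]$ with $c_b=\max\{c:\gamma(c)=0\}\in(0,\infty)$ and $\gamma(c)<0$ for $c>c_b$. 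For $c<c_b$ I need $E>0$ on $S(c)$: this is (i) when $c<c_0$, while for $c_0\le c<c_b$ a would-be minimizer $u$ is a constrained critical point, so $Q(u)=0$ (Proposition \ref{betaneq0}), and then $E(u)=0=Q(u)$ forces $A(u)=\tfrac{6(p-4)}{p}C(u)$, $B(u)=-\tfrac{2(3p-10)}{p}C(u)$, whence the derivative of $\mu\mapsto E(\mu u)$ at $\mu=1$ equals $-\tfrac{4(p-4)}{p}C(u)<0$, so $\gamma(c')<0$ for $c'$ just above $c$ — impossible for $c<c_b$; hence no minimizer. For $c>c_b$ I take an $H^1$-bounded minimizing sequence $u_n$ for $\gamma(c)$ and run concentration-compactness on $|u_n|^2$: vanishing is ruled out because it forces $\|u_n\|_4\to0$, so $B(u_n)\to0$ and $\liminf E(u_n)\ge0>\gamma(c)$ (the good sign of $\tfrac2p\lambda_3\|\cdot\|_p^p$ makes this work also for $p=6$); dichotomy is ruled out because the $|x|^{-3}$-decay of $K$ makes the interaction of separating pieces vanish, giving $\gamma(c)\ge\gamma(\alpha)+\gamma(c-\alpha)$ for some $\alpha\in(0,c)$, whereas concavity of $\gamma$ with $\gamma(0)=0$, $c_b>0$ and $\gamma(c)<0$ gives the strict reverse inequality; therefore $u_n$ is relatively compact up to translations, and its limit is a minimizer of $E$ on $S(c)$.
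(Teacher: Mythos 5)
Your Part 1 and the skeleton of Part 2 are sound and largely parallel the paper: positivity of $B$ under \eqref{case_a1}--\eqref{case_a2} plus the mass-preserving dilation $t^{3/2}u(tx)$ gives the first statement exactly as in Proposition \ref{betaneq0} and Lemma \ref{Nonexistence of minimizers}; your small-mass positivity criterion $|B(u)|^{3p-10}\le K_p A(u)^{3p-12}C(u)^2$ (the exponents and the Gagliardo--Nirenberg/interpolation bookkeeping check out) is a scale-invariant repackaging of the paper's two-case estimate and yields $c_b>0$; your Pohozaev-based exclusion of minimizers on $(c_0,c_b)$ via $Q(u)=0$ and the amplitude perturbation $\mu u$ is a legitimate variant of the paper's argument with the rescaling ${}^tu=t^{-3/p}u(t^{-1}x)$; and your concentration-compactness step (vanishing via Lions, dichotomy via strict subadditivity forced by concavity, $\gamma(0)=0$, $\gamma\equiv 0$ on $[0,c_b]$ and $\gamma(c)<0$) is the same rigidity mechanism as Lemma \ref{maris}.

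The genuine gap is the step you yourself flag and then ``grant'': the monotonicity, continuity and concavity of $c\mapsto\gamma(c)$. In this problem these are not background facts but the technical core of the theorem --- they are what makes $c_b$ an attained maximum, what forces $\gamma\equiv 0$ on $(0,c_b)$ (which your nonexistence and positivity arguments silently use), and what rules out dichotomy. The paper devotes two full lemmas to them: strict monotonicity (Lemma \ref{strict decreasing}) requires the rescaling ${}^tu$ that leaves $C$ invariant, the construction of test functions with $B<0$ (Lemma \ref{bu<0}), and a nontrivial $t_n\to1$ argument along minimizing sequences; concavity (Lemma \ref{maris}) requires the reflection construction of Mari\c{s} combined with the explicit Lopes--Mari\c{s} identities \eqref{sym1}--\eqref{sym2}, where the sign of $\lambda_2$ dictates whether one reflects across $\{x_3=t\}$ or $\{x_1=t\}$ after splitting off the $-\tfrac{4\pi}{3}$ part of $\widehat K$. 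Naming these tools is not the same as carrying out the argument, so as written the proof of Part 2 is incomplete. A smaller but real flaw: your proposed construction of $v$ with $B(v)<0$ via a ``Fej\'er-type modulation'' cannot work as stated, because $B$ depends only on $|v|^2$ and modulation leaves $|v|^2$ unchanged; the correct device (Lemma \ref{bu<0}) is an anisotropic dilation such as $t^{5/4}u(tx_1,tx_2,\sqrt t\,x_3)$, which concentrates the $L^2$-mass of $\widehat{|u|^2}$ in the directions where $\lambda_1+\lambda_2\widehat K<0$, followed by dominated convergence.
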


Next, we summarize some qualitative properties of minimizers. The symmetry assertions are obtained via the method of \cite{LopesMaris2008} consisting in a reflection argument and integral identities. The rest can be obtained using standard techniques.
\begin{proposition}[Qualitative properties of standing waves]\label{Theorem 1}
Let $V_{ext}=0$, $c>0$, $\lambda_3>0$, $p\in(4,6]$ and assume that $u\in S(c)$ is a minimizer of $E$ on $S(c)$. Then
\begin{enumerate}
\item Suppose that $p\neq 6$. Then
\begin{enumerate}
\item If $\lambda_2=0$, then $u$ is (up to translation) radially symmetric.
\item If $\lambda_2<0$, then $u$ is (up to translation) axially symmetric with respect to the $x_3$-axis.
\item If $\lambda_2>0$, there exists a minimizer $v$ such that it is (up to translations) symmetric with respect to the $(x_1,x_2)$-plane.
\end{enumerate}
\item If $p=6$, then
\begin{enumerate}
\item If $\lambda_2=0$, there exists a minimizer $v$ such that it is (up to translations) radially symmetric.
\item If $\lambda_2<0$, there exists a minimizer $v$ such that it is (up to translations) axially symmetric with respect to the $x_3$-axis.
\item If $\lambda_2>0$, there exists a minimizer $v$ such that it is (up to translations) symmetric with respect to the $(x_1,x_2)$-plane.
\end{enumerate}
\item The modulus $|u|$ is also a minimizer of $E$ on $S(c)$. Moreover, if $p\neq 6$, there exists some real number $\theta\in\R$ such that $u=e^{i\theta}|u|$ and $|u(x)|>0$ for all $x\in\R^3$.
\item If $p\neq 6$ and $(v,\beta)$ is a solution to \eqref{solution}, then $v$ is of class $W^{3,p}$ for all $p\in[2,\infty)$ and there exist constants $L,M>0$ such that
  \begin{equation*}
   e^{L|x|}\,\big(|v(x)|+|\nabla v(x)|\big)\leq M\ \text{ for all }\ x\in \R^3.
  \end{equation*}
\end{enumerate}
\end{proposition}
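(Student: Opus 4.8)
Items (3) and (4) are the routine part; the common engine is that \emph{every nontrivial solution $(v,\beta)$ of \eqref{solution} has $\beta>0$}: testing against $v$ gives $\tfrac12 A(v)+B(v)+C(v)+\beta\|v\|_2^2=0$, while $Q(v)=0$ (Proposition~\ref{betaneq0}) gives $B(v)=-\tfrac23A(v)-\tfrac{2(3p-6)}{3p}C(v)$, and eliminating $B$ yields $\beta\|v\|_2^2=\tfrac16A(v)+\tfrac{3p-12}{3p}C(v)>0$ since $p>4$ and $C(v)=\lambda_3\|v\|_p^p>0$. For (4): with $\beta>0$ the resolvent $(-\tfrac12\Delta+\beta)^{-1}$ maps $L^q\to W^{2,q}$ for all $q\in(1,\infty)$; using this together with $\|K*f\|_q\lesssim\|f\|_q$ (\cite[Lemma 2.1]{Carles2008}) and Sobolev embedding, a finite bootstrap of \eqref{solution} written as $v=(-\tfrac12\Delta+\beta)^{-1}\!\big[-\lambda_1|v|^2v-\lambda_2(K*|v|^2)v-\lambda_3|v|^{p-2}v\big]$ — the integrability improving at each step because $p<6$ — gives $v\in W^{3,q}$ for all $q\in[2,\infty)$, hence $v,\nabla v\to0$ at infinity. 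Then $|v|^2,\,K*|v|^2,\,|v|^{p-2}\to0$ at infinity, so \eqref{solution} reads $\tfrac12\Delta v=(\beta+o(1))v$ there, and comparison with the barriers $Me^{-L|x|}$, $0<L<\sqrt{2\beta}$, gives the exponential bound for $v$ and, after differentiating, for $\nabla v$. For (3): $E$ sees $u$ only through $|u|$ and $|\nabla u|$, and $|\nabla|u||\le|\nabla u|$ a.e.\ forces $A(|u|)\le A(u)$ while $B,C,\|\cdot\|_2$ are unchanged, so $E(|u|)\le E(u)=\gamma(c)$ and $|u|$ is a minimizer. If $p\neq6$ then $|u|\in C^1$ solves \eqref{solution}, i.e.\ $-\Delta|u|+c\,|u|=0$ with $c\in L^\infty_{\mathrm{loc}}$ (by the regularity just obtained), so the strong maximum principle gives $|u|>0$ on $\R^3$; and $E(|u|)=E(u)$ forces $A(u)=A(|u|)$, hence $|\nabla|u||=|\nabla u|$ a.e., so $u/|u|$ has vanishing gradient and $u=e^{i\theta}|u|$ for a constant $\theta$.

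Items (1) and (2) are the technical core; here I would use the reflection method of \cite{LopesMaris2008}. By (3) it suffices, for $p\neq6$, to prove the symmetry of the positive minimizer $|u|$; for $p=6$ one argues with the nonnegative minimizer $|u|$ and obtains the weaker statement. Fix a unit vector $e$, pick $t$ so that each of $\{x\cdot e>t\}$, $\{x\cdot e<t\}$ carries $\tfrac12\int|u|^2$, let $\sigma$ be the reflection in $H=\{x\cdot e=t\}$, and let $u_1$ (resp.\ $u_2$) equal $|u|$ on $H^+$ (resp.\ $H^-$) and $|u|\circ\sigma$ on the opposite half-space. Then $u_1,u_2\in S(c)$ are $\sigma$-symmetric, the local part of $E$ splits \emph{exactly} into a sum over the two halves, and the nonlocal part obeys the identity
\[
\mathcal{D}_K(|u_1|^2)+\mathcal{D}_K(|u_2|^2)-2\mathcal{D}_K(|u|^2)=2\!\iint_{H^+\times H^+}\!K(x-\sigma y)\,\mu(x)\,\mu(y)\,dx\,dy,\qquad \mu\defeq\big(|u|^2-|u|^2\!\circ\sigma\big)\mathbf{1}_{H^+},
\]
where $\mathcal{D}_K(\rho)\defeq\iint K(x-y)\rho(x)\rho(y)$; this requires $K\circ\sigma=K$, which holds exactly when $\sigma$ fixes the dipole axis, i.e.\ $e\perp n$ or $e=\pm n$. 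The decisive (and heaviest) step is that $\lambda_2$ times the right-hand side is $\le0$: automatically for $\lambda_2=0$; for $\lambda_2<0$ precisely when $e\perp n$; for $\lambda_2>0$ when $e=\pm n$. This is established via the Newtonian splitting $K=-\partial_3^2|x|^{-1}-\tfrac{4\pi}3\delta_0$ — equivalently $B(u)=\big(\lambda_1-\tfrac{4\pi}3\lambda_2\big)\|u\|_4^4+\lambda_2\iint\tfrac{\partial_3(|u|^2)(x)\,\partial_3(|u|^2)(y)}{|x-y|}\,dx\,dy$ — together with the half-space integral identities of \cite{LopesMaris2008} for the positive monotone kernel $|x|^{-1}$. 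Granting the sign, $E(u_1)+E(u_2)\le2\gamma(c)$, so $E(u_1)=E(u_2)=\gamma(c)$: both $u_i$ are minimizers, hence solve \eqref{solution} with the same $\beta$ (read off on $H^+$, where they agree and, for $p\neq6$, are positive). Since $|u|$ and $u_1$ coincide on the open set $H^+$ and solve the same elliptic equation, unique continuation yields $|u|=u_1$, i.e.\ $|u|$ is symmetric in $H$ (for $p=6$ one simply records that the minimizer $u_1$ is $\sigma$-symmetric by construction). Running $e$ over $S^2$ (case $\lambda_2=0$), over $\{e\perp n\}$ (case $\lambda_2<0$), or taking $e=n$ (case $\lambda_2>0$), all resulting symmetry hyperplanes pass through the center of mass of $|u|^2$; translating it to the origin turns the family into $\{e^\perp\}$ and the generated reflection group into $O(3)$, the rotations about the $x_3$-axis, or the single reflection in the $(x_1,x_2)$-plane respectively, giving the stated symmetries (for $p=6$, only existence of a minimizer symmetric in the relevant hyperplane(s)). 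For $\lambda_2=0$ the radial case also follows from Schwarz symmetrization and the Brothers--Ziemer equality case of the P\'olya--Szeg\H{o} inequality, legitimate since $|u|>0$ solves an elliptic equation and hence its rearrangement has no flat part.

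The main obstacle is precisely the sign of the half-space bilinear form $\iint_{H^+\times H^+}K(x-\sigma y)\,\mu(x)\mu(y)\,dx\,dy$. Since the dipolar kernel is sign-indefinite (its Fourier multiplier $\widehat K$ changes sign) and is not monotone under symmetric rearrangement, this sign is not automatic; it is recovered only after peeling off the Newtonian part via $K=-\partial_3^2|x|^{-1}-\tfrac{4\pi}3\delta_0$ and invoking the reflection identities of \cite{LopesMaris2008}, and it depends genuinely on the orientation of $H$ relative to $n$. This is exactly why, for $\lambda_2<0$, the entire pencil of hyperplanes containing the $x_3$-axis is usable and produces full axial symmetry, whereas for $\lambda_2>0$ only the plane $n^\perp$ is usable — yielding merely reflection symmetry, and only in the form ``there exists a minimizer $v$''.
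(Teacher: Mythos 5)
Your overall strategy coincides with ours: items (3)--(4) by standard variational/elliptic arguments (which we outsource to \cite{BellazziniEtAl2017} and \cite[Theorem 8.1.1]{Cazenave2003}), and the symmetry statements by the reflection method of \cite{LopesMaris2008}, with exactly the same orientation-dependence of the sign of the nonlocal deficit (your half-space identity is the physical-space counterpart of \eqref{sym1}--\eqref{sym2}), and with the $\lambda_2>0$ and $p=6$ cases settled, as in our proof, by observing that the reflected functions are themselves minimizers. The decisive gap is in your equality-case analysis for 1(b) (every minimizer axially symmetric when $\lambda_2<0$): having shown that $u_1$ is also a minimizer, you assert that $|u|$ and $u_1$ ``solve the same elliptic equation'' on $H^+$ and conclude $|u|=u_1$ by unique continuation. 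They do not solve the same equation: the Euler--Lagrange equations contain $K*|u|^2$ and $K*u_1^2$ respectively, and these are different functions on all of $\R^3$ (in particular on $H^+$) even though $|u|=u_1$ there. Subtracting the two equations on $H^+$ only gives that $\lambda_2\,K*(|u|^2-u_1^2)$ is constant there, so equality of the multipliers is not immediate; and even granting it, the difference $|u|-u_1$ satisfies a linear equation with the nonlocal term $u_1\,K*\big((|u|+u_1)(|u|-u_1)\big)$, to which standard Carleman-type unique continuation does not apply. This is precisely the difficulty resolved by \cite[Theorem 4.9]{LopesMaris2008}, which we cite rather than re-derive; your one-line reduction does not replace it. (For $\lambda_2=0$ the nonlocal term is absent, so your argument for 1(a) is the classical Lopes argument and is fine.) Relatedly, for 2(a)--2(b) a single reflection only produces a minimizer symmetric about one hyperplane; obtaining a radially or axially symmetric minimizer needs an additional step (Schwarz symmetrization when $\lambda_2=0$, an iteration/limiting argument or again the results of \cite{LopesMaris2008} when $\lambda_2<0$), which your sketch omits.

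A smaller, fixable point concerns item (4): you invoke $Q(v)=0$ from Proposition \ref{betaneq0}, but that identity is proved there only for minimizers. For an arbitrary solution of \eqref{solution} the Pohozaev identity must be derived separately (possible for $p<6$, but only after the regularity step, so your order of argument is circular as written); the bootstrap should therefore be run with $(-\tfrac12\Delta+1)^{-1}$ so as not to presuppose $\beta>0$, and $\beta>0$ then follows from your (correct) algebra $\beta\|v\|_2^2=\tfrac16A(v)+\tfrac{p-4}{p}C(v)$ once $Q(v)=0$ is justified. With these caveats, the arguments you give for (3) and the rest of (4), as well as for 1(c) and 2(c), are sound and essentially the ones behind the references we cite.
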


\section{Pohozaev, boundedness and positivity}
We start by proving some first properties of the model under consideration, and tools that will be needed in later analysis. Note that the Pohozaev identities cannot be extracted for the energy critical case by testing the equation with $x\cdot\nabla u$. However, we overcome this problem since we are dealing with minimizers. This section is devoted to the proof of the following proposition.
\begin{proposition}\label{betaneq0}
 Let $V_{ext}=0$, $c>0,\,\lambda_3>0$ and $p\in(4,6]$.
 \begin{enumerate}
  \item The energy $E$ is bounded below in $S(c)$. Moreover, $\gamma(c)\leq 0$.
  \item If $u\in S(c)$ is a minimizer of $E$ on $S(c)$, then there exists $\beta>0$ such that $(u,\beta)$ is a solution to \eqref{solution}.
  \item If $u\in S(c)$ is a minimizer of $E$ on $S(c)$, then the following Pohozaev identities hold:
  \begin{align}
   Q(u)&=A(u)+\frac{3}{2}B(u)+\frac{3p-6}{p}C(u)=0,\label{qu=0}\\
   \beta\|u\|_2^2&=-\frac{1}{4}B(u)+\frac{p-6}{2p}C(u).\label{bu+cu}
  \end{align}
 \end{enumerate}
\end{proposition}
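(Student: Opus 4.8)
My strategy is to dispatch the three parts in sequence, since each feeds into the next: boundedness below and $\gamma(c)\le0$ first, then the Euler--Lagrange multiplier, then the Pohozaev identities via a scaling argument.

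\textit{Part 1: Boundedness below and $\gamma(c)\le 0$.} For $u\in S(c)$ I would estimate the energy from below by combining the identity $E(u)=\tfrac12 A(u)+\tfrac12 B(u)+\tfrac2p C(u)$ with the optimal bound $|B(u)|\le\Xi\,\|u\|_4^4$ from \eqref{B_estimate} and the Gagliardo--Nirenberg inequality \eqref{G-N}. Concretely, $\|u\|_4^4\le \mathrm C_{1}^{4}\,\|\nabla u\|_2^{3}\,\|u\|_2 = \mathrm C_1^4\,c^{1/2}A(u)^{3/2}$, and since $\lambda_3>0$ the term $\tfrac2p C(u)$ is nonnegative and can simply be dropped. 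Thus $E(u)\ge \tfrac12 A(u)-\tfrac{\Xi}{2}\mathrm C_1^4 c^{1/2} A(u)^{3/2}$, a function of $A(u)\in[0,\infty)$ that is bounded below (it behaves like $t/2 - \text{const}\cdot t^{3/2}$, which tends to $-\infty$, so I actually need to also keep the $C(u)$ term). Let me reconsider: since $p>4$, the term $\tfrac2p C(u)\ge 0$ grows like $\|u\|_p^p$ and, again by \eqref{G-N} with $2\sigma+2=p$, i.e. $\sigma=(p-2)/2$, we have $\|u\|_p^p\le \mathrm C_\sigma^p A(u)^{3\sigma/2} c^{(2-\sigma)/2}$ with $3\sigma/2 = 3(p-2)/4 > 3/2$ since $p>4$. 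So the competition is between $-A(u)^{3/2}$ (from $B$) and $+A(u)^{3(p-2)/4}$ (from $C$), and since $3(p-2)/4>3/2$ the positive higher-order term dominates for large $A(u)$; hence $E$ is coercive in $A(u)$ on $S(c)$ and in particular bounded below. For $\gamma(c)\le 0$: take any fixed $u\in S(c)$ and consider the dilations $u_t(x)=t^{3/2}u(tx)$, which preserve the $L^2$-norm; then $A(u_t)=t^2 A(u)$, $B(u_t)=t^3 B(u)$, $C(u_t)=t^{3(p-2)/2}C(u)$, and $E(u_t)\to 0$ as $t\to 0^+$ since all exponents are positive. Hence $\gamma(c)\le 0$.

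\textit{Part 2: The Lagrange multiplier $\beta$.} If $u$ minimizes $E$ on the constraint manifold $S(c)=\{\|u\|_2^2=c\}$, the Lagrange multiplier rule on the Hilbert manifold (the constraint functional $u\mapsto\|u\|_2^2$ has nonvanishing derivative $2u\ne0$ away from the origin, and $u\ne 0$ on $S(c)$) yields $E'(u)=-\beta\,u$ in $H^{-1}$ for some real $\beta$, which is exactly \eqref{solution}. The positivity $\beta>0$ I would get by pairing the equation with $u$ itself, i.e. using the Nehari-type identity $A(u)+2B(u)+2C(u)+\beta c = 0$ (obtained from testing \eqref{solution} with $\bar u$ and integrating, noting $\int(K*|u|^2)|u|^2 = (2\pi)^{-3}\int(\lambda_1+\lambda_2\widehat K)|\widehat{|u|^2}|^2$ already absorbed into the definition so testing gives $2B$), together with the Pohozaev identity $Q(u)=0$ from Part 3. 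Eliminating between these two linear relations and using $\lambda_3>0$, $B(u)$ possibly negative, one extracts $\beta c = -\tfrac14 B(u)+\tfrac{p-6}{2p}C(u)$ — wait, that's \eqref{bu+cu}, and one then needs a separate argument that this is strictly positive. The cleanest route to $\beta>0$ is probably: combine the Nehari identity with $Q(u)=0$ to write $\beta c$ in a manifestly-signed way; if $p<6$ the term $\tfrac{p-6}{2p}C(u)<0$, so I would instead eliminate $C(u)$ and express $\beta c$ in terms of $A(u)$ and $B(u)$, getting something like $\beta c = \tfrac{?}{?}A(u) + \tfrac{?}{?}B(u)$; the surviving obstacle is the sign of $B(u)$. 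Here I expect to use that $\gamma(c)\le 0$ forces the minimum energy value to be $\le 0$, which combined with $E(u)=\tfrac12 A + \tfrac12 B + \tfrac2p C$ and $Q(u)=0$ pins down enough sign information. \textbf{This is the step I expect to be the main obstacle}, since $B(u)$ is sign-indefinite and one must really use minimality (not just criticality) — presumably via a careful linear-algebra manipulation of the three identities $E(u)=\gamma(c)\le0$, the Nehari identity, and $Q(u)=0$.

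\textit{Part 3: The Pohozaev identities.} The identity \eqref{bu+cu} is just the Nehari identity obtained by testing \eqref{solution} with $\bar u$ (legitimate since $u\in H^1$ and the equation holds in $H^{-1}$), rewritten using the definitions of $A,B,C$. For \eqref{qu=0}, the standard route of testing with $x\cdot\nabla u$ fails in the critical case as the authors note, so instead I use minimality: for $u\in S(c)$ a minimizer, the dilations $u_t(x)=t^{3/2}u(tx)$ stay on $S(c)$, so $t\mapsto E(u_t)$ has a critical point at $t=1$. Computing $\frac{d}{dt}\big|_{t=1}E(u_t)=0$ with $E(u_t)=\tfrac12 t^2 A(u)+\tfrac12 t^3 B(u)+\tfrac2p t^{3(p-2)/2}C(u)$ gives $A(u)+\tfrac32 B(u)+\tfrac{3(p-2)}{p}C(u)=0$, i.e. $Q(u)=0$ (noting $\tfrac{3(p-2)}{p}=\tfrac{3p-6}{p}$). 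This also requires justifying the scaling behaviour of $B$ under $u\mapsto u_t$, which follows from the change of variables in the Fourier representation $B(u)=(2\pi)^{-3}\int(\lambda_1+\lambda_2\widehat K(\xi))|\widehat{|u|^2}(\xi)|^2d\xi$ together with the $0$-homogeneity of $\widehat K$. Finally I would loop back and insert $Q(u)=0$ and the Nehari identity into the elimination of Part 2 to finish $\beta>0$; since $\tfrac{p-6}{2p}C(u)\le0$ for $p\le 6$, the positivity of $\beta$ must come from $B(u)<0$ in these parameter regimes, which is consistent with the fact (to be used) that a minimizer can only exist when $\Xi$ is attained with the ``wrong'' sign, i.e. in the regime where $\lambda_1+\lambda_2\widehat K$ can be negative.
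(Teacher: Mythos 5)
Your Part 3 (the rescaling $u^t(x)=t^{3/2}u(tx)$ preserving $S(c)$ and $\frac{d}{dt}E(u^t)\big|_{t=1}=0$ giving \eqref{qu=0}) is exactly the paper's argument and is fine, but there are two genuine gaps elsewhere. First, in Part 1 your coercivity argument runs Gagliardo--Nirenberg in the wrong direction: \eqref{G-N} gives an \emph{upper} bound $\|u\|_p^p\le \mathrm C_\sigma^p A(u)^{3(p-2)/4}c^{(6-p)/4}$, so you cannot conclude that the positive term $\tfrac2p C(u)$ dominates $-A(u)^{3/2}$ for large $A(u)$; on $S(c)$ one can have $A(u_n)\to\infty$ with $C(u_n)$ bounded (rapid oscillation at fixed $L^2$ and $L^p$ size), so the ``competition of powers of $A$'' is vacuous, and in fact $E$ need not be coercive in $A(u)$. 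What is needed is a \emph{lower} bound on $C(u)$, and this is the step the paper supplies: by H\"older interpolation of $L^4$ between $L^2$ and $L^p$ (possible since $p>4$), $\|u\|_4^{2(p-2)}\le \|u\|_p^p\,\|u\|_2^{p-4}=c^{(p-4)/2}\lambda_3^{-1}C(u)$, which together with \eqref{B_estimate} and $A(u)\ge \mathrm C_1^{-8/3}c^{-1/3}\|u\|_4^{8/3}$ yields $E(u)\ge \alpha\,\|u\|_4^{8/3}-\tfrac{\Xi}{2}\|u\|_4^4+\beta\,\|u\|_4^{2(p-2)}$ with $\alpha,\beta>0$; since $2(p-2)>4$ this is bounded below (the coercivity is in $\|u\|_4$, not in $A$).

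Second, the positivity of $\beta$ --- precisely the step you flag as ``the main obstacle'' and leave open --- is not completed, and your guess that it must come from $B(u)<0$ is off target: no sign information on $B$ is needed. With \eqref{qu=0} and the correct Nehari identity $\tfrac12 A(u)+B(u)+C(u)+\beta\|u\|_2^2=0$ (note your version $A+2B+2C+\beta c=0$ has a factor slip: \eqref{solution} is $\tfrac12 E'(u)+\beta u=0$, so testing with $\bar u$ gives the identity just stated; with your constants the elimination would produce twice the right-hand side of \eqref{bu+cu}), one eliminates $A$ to get \eqref{bu+cu}, and then the paper eliminates $B$ via the energy: $-\tfrac14 B(u)=-\tfrac12 E(u)+\tfrac14 A(u)+\tfrac1p C(u)$, so that $\beta\|u\|_2^2=-\tfrac12\gamma(c)+\tfrac14 A(u)+\tfrac{p-4}{2p}C(u)>0$, using only $E(u)=\gamma(c)\le 0$ from Part 1, $A(u)>0$, $C(u)\ge 0$ and $p>4$. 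This is a two-line computation, valid for either sign of $B(u)$ and uniformly for $p\in(4,6]$; supplying it (and the H\"older interpolation above) is what your proposal is missing.
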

\begin{proof}
 \textit{1.} Recall that
\begin{equation*}
E(u)=\frac{1}{2}A(u)+\frac{1}{2}B(u)+\frac{2}{p}C(u).
\end{equation*}
Suppose that $E(u)$ is unbounded below. Then there exists a sequence $\{u_n\}_{n\in\N}\subset S(c)$ with $E(u_n)\to -\infty$ as $n\to\infty$. It then follows directly that $A(u_n)$ and $C(u_n)$ are positive. Thus we must have $B(u_n)\to-\infty$ as $n\to \infty$. Since from \eqref{B_estimate} and the nondegeneracy assumption follows
$$\|u_n\|_4^4\geq \Xi^{-1}|B(u_n)|,$$
we obtain that $\|u_n\|_4\to \infty$ as $n\to\infty$. On the other hand, from H\"older and Gagliardo-Nirenberg inequalities we have
\begin{align*}
\|u\|_4&\leq \|u\|_p^{\frac{p}{2(p-2)}}\|u\|_2^{\frac{p-4}{2(p-2)}} =c^{\frac{p-4}{4(p-2)}}\lambda_3^{-\frac{1}{2(p-2)}}C(u)^{\frac{1}{2(p-2)}},\\
\|u\|_4^{8/3}&\leq \mathrm C_{1}^{8/3} \|\nabla u\|_2^{2}\|u\|_2^{2/3}=\mathrm C_{1}^{8/3} c^{1/3}A(u),
\end{align*}
where $\mathrm C_{1}$ is the corresponding Gagliardo-Nirenberg constant (see Theorem \ref{optimal_G-N}). Thus we obtain that
\begin{align}\label{infinity}
E(u_n)&= \frac{1}{2}A(u_n)+\frac{1}{2}B(u_n)+\frac{2}{p}C(u_n)\nonumber\\
&\geq \frac{1}{2\mathrm C_{1}^{8/3} c^{1/3}}\|u_n\|_4^{8/3}-\frac{\Xi}{2}\|u_n\|_4^4+\frac{2\lambda_3}{pc^{\frac{p-4}{2}}}\|u_n\|_4^{2(p-2)}\to\infty
\end{align}
as $\|u_n\|_4\to \infty$ (since $2(p-2)>4$), which is a contradiction. Therefore, $E$ is bounded below on $S(c)$.

For $t\in(0,\infty)$ we define the scaling
\begin{equation}\label{scaling_a1}
u^{t}(x)\defeq t^{3/2} u(tx).
\end{equation}
Transforming the corresponding integrals we obtain that
\begin{equation}\label{scaling_a2}
\begin{aligned}
\|u^t\|_2^2&=\|u\|_2^2,\\
A(u^t)&=t^{2}A(u),\\
B(u^t)&=t^3 B(u),\\
C(u^t)&= t^{\frac{3p}{2}-3}C(u).
\end{aligned}
\end{equation}
Thus it follows
$$E(u^t)=\frac{t^2}{2}A(u)+\frac{t^3}{2}B(u)+\frac{2t^{\frac{3p}{2}-3}}{p}C(u),$$
which implies that $E(u^t)$ converges to $0$ as $t$ shrinks to $0$. Thus we infer that $\gamma(c)\leq 0$.

\textit{2. and 3.} That $u$ solves \eqref{solution} with some chemical potential $\beta\in\C$ follows immediately from Lagrange multiplier theorem. Since $u$ is a minimizer, we obtain that the real function $t\mapsto E(u^t)$ is smooth in $(0,\infty)$ and has a minimum at $t=1$. Thus
\begin{align*}
0=\frac{d}{dt}E(u^t)\bigg|_{t=1}=A(u)+\frac{3}{2}B(u)+\frac{3p-6}{p}C(u)=Q(u),
\end{align*}
which shows \eqref{qu=0}.

Multiplying \eqref{solution} with $\bar{u}$ we obtain that
\begin{align}\label{eq1}
&\frac{1}{2}A(u)+B(u)+C(u)+\beta\|u\|^2_{2}=0.
\end{align}
Eliminating $A(u)$ from \eqref{qu=0} and \eqref{eq1} we obtain that
\begin{align*}
\beta\|u\|_2^2=-\frac{1}{4}B(u)+\frac{p-6}{2p}C(u),
\end{align*}
which shows \eqref{bu+cu} and $\beta\in\R$. It is left to show $\beta>0$. From \eqref{bu+cu} we obtain that
\begin{equation}\label{>0}
\begin{aligned}
\beta\|u\|_2^2&=-\frac{1}{4}B(u)+\frac{p-6}{2p}C(u)\\
&=-\frac{1}{2}\big(\frac{1}{2}A(u)+\frac{1}{2}B(u)+\frac{2}{p}C(u)\big)+\frac{1}{4}A(u)+\frac{p-4}{2p}C(u)\\
&=-\frac{1}{2}E(u)+\frac{1}{4}A(u)+\frac{p-4}{2p}C(u)\\
&=-\frac{1}{2} \gamma(c)+\frac{1}{4}A(u)+\frac{p-4}{2p}C(u)>0,
\end{aligned}
\end{equation}
since $\gamma(c)\leq 0$ due to \textit{1.} and $p>4$. This completes the proof.
\end{proof}
\begin{remark}\label{uniform boundedness}
Let $\{u_n\}_{n\in\N}\subset S(c)$ be a minimizing sequence, i.e., $E(u_n)=\gamma(c)+o(1)$. Then due to the uniform boundedness of $E(u_n)$ and of $B(u_n)$ (which is obtained in the above proof), we also obtain the uniform boundedness of $A(u_n)$ and $C(u_n)$.
\end{remark}

\section{Global well-posedness theory}
We will prove Theorem \ref{Theorem 1} using the so-called Kato's method. To that end we first show the existence of local solutions and then give uniform bounds in time.
\begin{proposition}\label{local existence}
Let $p\in (4,6)$ and $\lambda_3>0$. For each $\psi_0\in H^1(\R^3,\C)$ exist $T_{\min},T_{\max}\in(0,\infty]$ maximal, such that \eqref{edGPE} possesses a unique strong solution $\psi$ on the interval $(-T_{\min},T_{\max})$, with initial datum $\psi_0$. In particular, the particle number and energy conserve, i.e.,
\begin{align*}
\|\psi(t)\|_2^2=\|\psi_0\|_2^2\text{ and }E\big(\psi(t)\big)=E(\psi_0)\text{ for all }t\in(-T_{\min},T_{\max}).
\end{align*}
Moreover the initial value problem is locally well-posed in $H^1(\R^3;\C)$ (in the sense of \cite[Definition 3.1.5]{Cazenave2003}).
\end{proposition}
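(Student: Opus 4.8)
The plan is to run Kato's fixed-point scheme exactly as in \cite[Chapters 3--4]{Cazenave2003}. Writing the nonlinearity (with $V_{ext}=0$) as
$$\mathcal N(\psi)\defeq\lambda_1|\psi|^2\psi+\lambda_2\,(K*|\psi|^2)\,\psi+\lambda_3|\psi|^{p-2}\psi,$$
one reformulates \eqref{edGPE} as the Duhamel equation $\psi(t)=e^{it\frac{\Delta}{2}}\psi_0-i\int_0^t e^{i(t-s)\frac{\Delta}{2}}\mathcal N(\psi(s))\,ds$. First I would fix a Strichartz-admissible pair $(q,r)$ in $\R^3$ (so $\frac2q+\frac3r=\frac32$, $2\le r<6$) adapted to the worst nonlinear exponent (e.g.\ $r=p\in(4,6)$, which is admissible), and seek the solution in the complete metric space
$$X_{T,M}\defeq\big\{\psi\in C\big([-T,T];H^1\big)\cap L^q\big([-T,T];W^{1,r}\big):\ \|\psi\|_{L^\infty_tH^1}+\|\psi\|_{L^q_tW^{1,r}}\le M\big\},$$
endowed with the distance induced by $L^\infty_tL^2\cap L^q_tL^r$, taking $M\simeq\|\psi_0\|_{H^1}$ and $T$ small.

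The core is the nonlinear estimate showing that $\psi\mapsto\int_0^t e^{i(t-s)\Delta/2}\mathcal N(\psi)\,ds$ is a contraction on $X_{T,M}$: for $\psi,\varphi\in X_{T,M}$ one needs
$$\big\|\mathcal N(\psi)-\mathcal N(\varphi)\big\|_{L^{q'}_tW^{1,r'}}\le C(M)\,T^{\theta}\,\big\|\psi-\varphi\big\|_{L^q_tL^r\cap L^\infty_tL^2}$$
for some $\theta>0$. The three terms are handled separately. The cubic term $|\psi|^2\psi$ is the textbook computation (H\"older in space using $H^1\hookrightarrow L^6$, H\"older in time to produce $T^\theta$). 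The nonlocal term is no harder: by \eqref{rangeKhat} the multiplier $\widehat K$ is bounded, so $f\mapsto K*f$ is a Calder\'on--Zygmund operator, bounded on $L^\rho(\R^3)$ for every $\rho\in(1,\infty)$ and commuting with $\nabla$ (cf.\ \cite[Lemma 2.1]{Carles2008}); hence $\|(K*|\psi|^2)\nabla\psi\|_{r'}\lesssim\||\psi|^2\|_{\rho}\|\nabla\psi\|_r$, etc., and one estimates exactly as for the cubic term. For the higher-order term, $z\mapsto|z|^{p-2}z$ is of class $C^1$ with $|\nabla(|z|^{p-2}z)|\lesssim|z|^{p-2}$ because $p-2>1$, so the Sobolev chain rule applies and the estimates close as for a pure power nonlinearity of order $p-1<5$, i.e.\ $H^1$-subcritical.

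Once the contraction yields, for each $\psi_0$, a unique solution on $[-T,T]$ with $T$ bounded below by a function of $\|\psi_0\|_{H^1}$, the standard blow-up alternative produces maximal times $T_{\min},T_{\max}\in(0,\infty]$; uniqueness in $C([-T,T];H^1)$ alone (not referring to the auxiliary $L^q_tW^{1,r}$ norm) follows from a separate Strichartz/Gr\"onwall argument as in \cite[Chapter 4]{Cazenave2003}, and the Lipschitz dependence built into the contraction gives local well-posedness in the sense of \cite[Definition 3.1.5]{Cazenave2003}. Conservation of the particle number $\|\psi(t)\|_2^2$ and of $E(\psi(t))$ follows from the usual regularization argument --- mollify the data, pair the equation with $\psi$ (for the mass) and with $\partial_t\psi$ (for the energy), then pass to the limit --- or directly from \cite[Theorems 3.3.9, 3.4.1 and 4.3.1]{Cazenave2003}, since $\mathcal N$ maps $H^1$ continuously into $H^{-1}$ and is the gradient of the potential part of the energy \eqref{energy}.

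I expect the main obstacle to be organizational rather than conceptual: selecting a single admissible pair $(q,r)$ for which all three nonlinear contributions close with a common positive power $T^\theta$ --- in particular the nonlocal term with the derivative falling on the convolution factor, and the LHY power $|\psi|^{p-2}\psi$ as $p\uparrow 6$. No genuinely new difficulty relative to the cubic NLS arises, since the nonlocal operator is an $L^\rho$-bounded multiplier commuting with derivatives and the higher-order nonlinearity is energy-subcritical; the proof is thus a routine adaptation of \cite[Chapter 4]{Cazenave2003} once the exponent bookkeeping is carried out.
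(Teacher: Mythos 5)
Your proposal is correct and takes essentially the same route as the paper: Kato's fixed-point scheme via Strichartz estimates, with the cubic and LHY powers treated as standard energy-subcritical nonlinearities and the dipolar term tamed by the boundedness of the multiplier $\widehat K$ from \eqref{rangeKhat}. The paper merely outsources the contraction and blow-up alternative to \cite[Theorem 4.4.6]{Cazenave2003}, so its proof reduces to verifying the hypotheses (4.4.21)--(4.4.22) there for $g(u)=(K*|u|^2)u$ with exponents $\rho=r=4$, using Plancherel's identity instead of your Calder\'on--Zygmund bounds --- a cosmetic difference only.
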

\begin{proof}
Define
$$g(u) \defeq (K*|u|^2) u$$
for $u\in H^1(\R^3;\C)$. In view of \cite[(4.4.21), (4.4.22)]{Cazenave2003} and if $V_{ext}=0$, Proposition \ref{local existence} will follow from \cite[Theorem 4.4.6]{Cazenave2003}, as long as we can prove that for every positive constant $M$, there exists some positive constant $\mathrm C(M)$, depending only on $M$, such that
\begin{align}
\|g(u)-g(v)\|_{\frac{4}{3}}&\leq \mathrm C(M)\|u-v\|_{4}\text{ and }\label{g_1}\\
\|g(u)\|_{W^{1,\frac{4}{3}}}&\leq \mathrm C(M)(1+\|u\|_{W^{1,4}})\label{g_2}
\end{align}
for all $u,v\in H^1(\R^3,\C)\cap W^{1,4}(\R^3,\C)$ with $\|u\|_{H^1},\|v\|_{H^1}\leq M$ (with $\rho=4$ and $r=4$ in \cite[(4.4.21), (4.4.22)]{Cazenave2003}). Concerning \eqref{g_1}, we obtain (using Plancherel's identity and the generalized H\"older inequality) that
\begin{align}\label{u-v_1}
\lnorm g(u)-g(v)\rnorm_{\frac{4}{3}}={}&\lnorm (K*|u|^2) u-(K*|v|^2) v\rnorm_{\frac{4}{3}}\nonumber\\
\leq {}&\lnorm \big(K*(|u|^2-|v|^2)\big)\,u\rnorm_{\frac{4}{3}}+\lnorm (K*|v|^2)\,(u- v)\rnorm_{\frac{4}{3}}\nonumber\\
\leq {}& \lnorm K*(|u|^2-|v|^2)\rnorm_{2}\,\lnorm u\rnorm_4+\lnorm K*|v|^2\rnorm_2\,\lnorm u-v\rnorm_4\nonumber\\
= {}&\frac{1}{(2\pi)^{3/2}}\Big(\lnorm \widehat{K}\,\mathcal{F}(|u|^2-|v|^2)\rnorm_{2}\,\lnorm u\rnorm_4+\lnorm \widehat{K}\,\mathcal{F}(|v|^2)\rnorm_2 \,\lnorm u-v\rnorm_4\Big)\nonumber\\
\leq {}& \frac{1}{(2\pi)^{3/2}}\frac{8\pi}{3}\Big(\lnorm \mathcal{F}(|u|^2-|v|^2)\rnorm_{2}\,\lnorm u\rnorm_4+\lnorm \mathcal{F}(|v|^2)\rnorm_2\,\lnorm  u-v\rnorm_4\Big)\nonumber\\
={}&\frac{8\pi}{3}\big(\lnorm |u|^2-|v|^2\rnorm_{2}\,\lnorm u\rnorm_4+\lnorm |v|^2\rnorm_2\,\lnorm u-v\rnorm_4\big)\nonumber\\
\leq {}&\frac{8\pi}{3}\big(\lnorm u-v\rnorm_{4}\,(\lnorm u\rnorm_4+\lnorm v\rnorm_4)\,\lnorm u\rnorm_4+\lnorm v\rnorm^2_4\,\lnorm u-v\rnorm_4\big)\nonumber\\
\leq{}& 4\pi\mathrm C^2\big(1+\lnorm u\rnorm^2_{H^1}+\lnorm v\rnorm^2_{H^1}\big)\,\lnorm u-v\rnorm_4,
\end{align}
where $\mathrm C$ is the constant appearing in the embedding $H^1\subset L^4$. Concerning \eqref{g_2}, taking $v=0$ in \eqref{u-v_1} we already see that $g(u)\in L^{\frac{4}{3}}(\R^3,\C)$. Hence, we only need to show that $\nabla g(u)\in L^{\frac{4}{3}}(\R^3,\C^3)$. We obtain that
\begin{align*}
\lnorm \nabla(K*|u|^2)\,u\rnorm_{\frac{4}{3}}={}&\lnorm \big(K*(\nabla(|u|^2)\big)\,u+(K*|u|^2)\,\nabla u\rnorm_{\frac{4}{3}}\nonumber\\
\leq {}&2\lnorm K*(u\nabla \bar{u})\rnorm_{2}\,\lnorm u\rnorm_4+\lnorm K*|u|^2\rnorm_2\,\lnorm \nabla u\rnorm_{L^4}\nonumber\\
\leq {}&\frac{8\pi}{3}(2\lnorm u\nabla \bar{u}\rnorm_2\lnorm u\rnorm_4+\lnorm |u|^2\rnorm_2\lnorm \nabla u\rnorm_4)\nonumber\\
\leq {}&\frac{8\pi}{3}(2\lnorm u\rnorm^2_4\lnorm \nabla u\rnorm_4+\lnorm u\rnorm^2_4\lnorm \nabla u\rnorm_4)\nonumber\\
={}&8\pi\lnorm u\rnorm^2_4\lnorm \nabla u\rnorm_4\leq 8\pi \mathrm C^2\,\big(1+\lnorm u\rnorm^2_{H^1}\big)\lnorm \nabla u\rnorm_{4}.
\end{align*}
Taking $\mathrm C(M) \defeq 8\pi \mathrm C^2(1+2M^2)$ finishes the proof.
\end{proof}
\begin{proof}[Proof of Theorem \ref{Theorem 2}]
To show the global well-posedness, one only needs to show that the local solution $\psi$ given by Proposition \ref{local existence} belongs to $L^\infty(\R;H^1(\R^3;\C))$, since the general results from \cite{Cazenave2003} that were used in the proof for local existence assert the blow-up alternative (i.e. the existence interval is maximal). In the same way we obtained \eqref{infinity}, we get that
\begin{equation}\label{boundedness u_4}
\begin{aligned}
E(\psi_0)=E(\psi(t))&= \frac{1}{2}A(\psi(t))+\frac{1}{2}B(\psi(t))+\frac{2}{p}C(\psi(t))\\
&\geq \frac{1}{2\mathrm C_1^{8/3} \|\psi(t)\|_2^{2/3}}\|\psi(t)\|_4^{8/3}-\frac{\Xi}{2}\|\psi(t)\|_4^4+\frac{2\lambda_3}{p\|\psi_0\|_2^{p-4}}\|\psi(t)\|_4^{2(p-2)}\\
&=\frac{1}{2\mathrm C_1^{8/3} \|\psi_0\|_2^{2/3}}\|\psi(t)\|_4^{8/3}-\frac{\Xi}{2}\|\psi(t)\|_4^4+\frac{2\lambda_3}{p\|\psi_0\|_2^{p-4}}\|\psi(t)\|_4^{2(p-2)},
\end{aligned}
\end{equation}
where $\mathrm C_1$ is the corresponding Gagliardo-Nirenberg constant (see Theorem \ref{optimal_G-N}). From \eqref{boundedness u_4}, one directly obtains that $\|\psi(t)\|_4$ is uniformly bounded in time by some positive constant $\mathrm C$ depending only on $\lambda_1,\lambda_2,\lambda_3,\psi_0$ and $p$. Now we also obtain that
\begin{equation}\label{boundedness nablapsi}
\begin{aligned}
\|\nabla \psi(t)\|_2^2&=2E(\psi(t))-B(\psi(t))-\frac{4}{p}C(\psi(t))\\
&\leq 2E(\psi_0)+\Xi\|\psi(t)\|_4^4\\
&\leq 2E(\psi_0)+\Xi \mathrm C^4,
\end{aligned}
\end{equation}
since $C(\psi(t))$ is positive for all $t\in\R$. Thus we obtain that $\|\nabla \psi(t)\|_2$ is uniformly bounded for all $t\in\R$. Together with particle conservation we obtain the result.
\end{proof}
\section{Existence of scattering states for small initial data}
We will use some shorthand definitions in order to keep the notation as simple as possible. First some function spaces: for $1\leq q,r,s\leq \infty$ and an interval $I\subseteq \R$ with $0\in I$ define
\begin{align*}
L^q_tL_x^r&\defeq L^q(0,t;L^r(\R^3;\C)),\\
L^q_t W_x^{1,r}&\defeq L^q(0,t;W^{1,r}(\R^3;\C)),\\
L^q_t H_x^{s}&\defeq L^q(0,t;H^{s}(\R^3;\C)),\\
L^q_I L_x^r&\defeq L^q(I;L^r(\R^3;\C)),\\
L^q_I W_x^{1,r}&\defeq L^q(I;W^{1,r}(\R^3;\C)),\\
L^q_I H_x^{s}&\defeq L^q(I;H^{s}(\R^3;\C)).
\end{align*}
We will also use the following notation:
\begin{enumerate}
\item We denote by $U(t):=e^{it\frac{\Delta}{2}}$ the unitary semigroup generated by $\frac{i\Delta}{2}$;
\item A pair $(q,r)$ is called an admissible pair, if $r\in[2,6]$ and
$$ \frac{2}{q}=3\,\Big(\frac{1}{2}-\frac{1}{r}\Big).$$
\item For a function $f:\R\to \C$, the function $\Psi_f$ is defined by
$$ \Psi_f(t)\defeq\int_0^tU(t-s)f(s)ds.$$
\end{enumerate}
We also recall the Strichartz estimates (see for instance \cite[Theorem 2.3.3, Corollary 2.3.6, Remark 2.3.8]{Cazenave2003}): For every admissible pair $(q,r)$ and $(q_1,r_1)$ there exist some positive constants $\mathrm C_{q,r}$, $\mathrm c_{q,r}$, $\mathrm C_{q,r,q_1,r_1}$ and $\mathrm c_{q,r,q_1,r_1}$ such that
\begin{equation}
\begin{aligned}
\|U(\cdot)\phi_1\|_{L_{I}^q L_x^r}&\leq \mathrm c_{q,r}\|\phi_1\|_{2},\\
\|U(\cdot)\phi_2\|_{L_{I}^q W_x^{1,r}}&\leq \mathrm C_{q,r}\|\phi_2\|_{H^1},\\
\|\Psi_{f_1}\|_{L^q_I L_x^r}&\leq \mathrm c_{q,r,q_1,r_1}\|f_1\|_{L^{q_1'}_I L_x^{r_1'}},\\
\|\Psi_{f_2}\|_{L^q_I W_x^{1,r}}&\leq \mathrm C_{q,r,q_1,r_1}\|f_2\|_{L^{q_1'}_I W_x^{1,r_1'}}
\end{aligned}
\end{equation} 
for all $\psi_1\in L^2(\R^3;\C)$, $\psi_2\in H^1(\R^3;\C)$, $f_1\in L^{q'}_I L_x^{r'}$ and $f_2\in L^{q_1'}_I W_x^{1,r_1'}$, where $q',\,r',\, q_1',r_1'$ are the corresponding conjugate exponents.
\begin{remark}
We will mainly use the following admissible pairs:
\begin{align*}
(\infty,2),\, \left(\frac{8}{3},4\right),\, \left(\frac{4p}{3(p-2)},p\right).
\end{align*}
\end{remark}
\begin{proof}[Proof of Theorem \ref{scattering}]
For convenience we will use $M$ for some positive constant which may vary from line to line, but depends only on $\lambda_1,\,\lambda_2,\,\lambda_3,\ p$ and Sobolev embedding constants, in various inequalities.

Recall Duhamel's formula
\begin{align}
\psi(t)=U(t)\psi_0-i\big(\lambda_1\Psi_{|\psi|^2\psi}(t)+\lambda_2\Psi_{(K*|\psi|^2)\psi}(t)+\lambda_3\Psi_{|\psi|^{p-2}\psi}(t)\big).
\end{align}
From \cite[Theorem 1.4]{BellazziniJeanjean2016} and the standard Strichartz estimates we already have:
\begin{equation}\label{strichartz1}
\begin{aligned}
\|\Psi_{|\psi|^2\psi}\|_{L_I^{\frac{4p}{3(p-2)}}W_x^{1,p}}&\leq \mathrm C_{\frac{4p}{3(p-2)},p,\frac{8}{3},4}\||\psi|^2\psi\|_{L_I^{\frac{8}{5}}W_x^{1,\frac{4}{3}}}\leq M(\|\psi_0\|_{H^1})\,\|\psi\|^{\frac{5}{3}}_{L_I^{\frac{8}{3}}W_x^{1,4}},\\
\|\Psi_{(K*|\psi|^2)\psi}\|_{L_I^{\frac{4p}{3(p-2)}}W_x^{1,p}}&\leq  \mathrm C_{\frac{4p}{3(p-2)},p,\frac{8}{3},4}\|(K*|\psi|^2)\psi\|_{L_I^{\frac{8}{5}}W_x^{1,\frac{4}{3}}}\leq M(\|\psi_0\|_{H^1})\,\|\psi\|^{\frac{5}{3}}_{L_I^{\frac{8}{3}}W_x^{1,4}}.
\end{aligned}
\end{equation}
Notice that \eqref{strichartz1} is based on the continuity of the convolution operator with kernel $K$ and on the boundedness of the $L^\infty_\R H_x^1$-norm of the solution $\psi$, which is ensured by Theorem \eqref{Theorem 2}; from its proof we obtain that $ M(\|\psi_0\|_{H^1})\strongly 0$ for $\|\psi_0\|_{H^1}\strongly 0$.

Now we derive the corresponding Strichartz estimates for the term $\Psi_{|\psi|^{p-2}\psi}$. Notice that the conjugate of the admissible pair $(\frac{4p}{3(p-2)},p)$ is $(\frac{4p}{p+6},\frac{p}{p-1})$. We obtain using H\"older's inequality that
\begin{equation}\label{strichartz2}
\begin{aligned}
\|\Psi_{|\psi|^{p-2}\psi}\|_{L_t^{\frac{4p}{3(p-2)}}L_x^{p}}\leq{}& \mathrm c_{\frac{4p}{3(p-2)},p,\frac{4p}{p+6},\frac{p}{p-1}}\||\psi|^{p-2}\psi\|_{L_t^{\frac{4p}{p+6}}L_x^{\frac{p}{p-1}}}\\
\leq{}& \mathrm c_{\frac{4p}{3(p-2)},p,\frac{4p}{p+6},\frac{p}{p-1}}\||\psi|^{p-2}\|_{L_t^{\frac{2p}{6-p}}L_x^{\frac{p}{p-2}}}\|\psi\|_{L_t^{\frac{4p}{3(p-2)}}L_x^p}\\
={}&\mathrm c_{\frac{4p}{3(p-2)},p,\frac{4p}{p+6},\frac{p}{p-1}}\|\psi\|^{p-2}_{L_t^{\frac{2p(p-2)}{6-p}}L_x^p}\|\psi\|_{L_t^{\frac{4p}{3(p-2)}}L_x^p}.
\end{aligned}
\end{equation}
On the other hand, define
\begin{align*}
\omega(s)\defeq\frac{2s(s-2)}{6-s}-\frac{4s}{3(s-2)}.
\end{align*}
Then
$$\omega(s)>0\Leftrightarrow s\in\Big(\frac{10}{3},6\Big).$$
Since $p\in(4,6)$, we obtain that $\omega(p)>0$, so that
\begin{equation}\label{strichartz3}
\begin{aligned}
\|\psi\|_{L_t^{\frac{2p(p-2)}{6-p}}L_x^p}\leq{}& \|\psi\|^{\frac{(6-p)\omega(p)}{2p(p-2)}}_{L_t^{\infty}L_x^p}\|\psi\|^{\frac{2(6-p)}{3(p-2)^2}}_{L_t^{\frac{4p}{3(p-2)}}L_x^p}\leq \|\psi\|^{\frac{(6-p)\omega(p)}{2p(p-2)}}_{L_t^{\infty}H_x^1}\|\psi\|^{\frac{2(6-p)}{3(p-2)^2}}_{L_t^{\frac{4p}{3(p-2)}}L_x^p}\\
\leq {}& M(\|\psi_0\|_{H^1})\,\|\psi\|^{\frac{2(6-p)}{3(p-2)^2}}_{L_t^{\frac{4p}{3(p-2)}}L_x^p},
\end{aligned}
\end{equation}
from the Sobolev embedding theorem and the fact that $\|\psi\|_{L_\R^{\infty}H_x^1}$ is bounded due to Theorem \ref{Theorem 2}. Then from \eqref{strichartz2} and \eqref{strichartz3} we obtain that 
\begin{align*}
\|\Psi_{|\psi|^{p-2}\psi}\|_{L_t^{\frac{4p}{3(p-2)}}L_x^{p}}\leq M(\|\psi_0\|_{H^1})\,\|\psi\|^{1+\frac{2(6-p)}{3(p-2)}}_{L_t^{\frac{4p}{3(p-2)}}L_x^p}= M(\|\psi_0\|_{H^1})\,\|\psi\|^{\frac{6+p}{3(p-2)}}_{L_t^{\frac{4p}{3(p-2)}}L_x^p}.
\end{align*}
Analogously we obtain that
\begin{align*}
\|\Psi_{\nabla(|\psi|^{p-2}\psi)}\|_{L_t^{\frac{4p}{3(p-2)}}L_x^{p}}\leq M(\|\psi_0\|_{H^1})\,\|\nabla \psi\|^{\frac{6+p}{3(p-2)}}_{L_t^{\frac{4p}{3(p-2)}}L_x^p}.
\end{align*}
The last two estimates imply that
\begin{align}
\|\Psi_{|\psi|^{p-2}\psi}\|_{L_t^{\frac{4p}{3(p-2)}}W_x^{1,p}}\leq M(\|\psi_0\|_{H^1})\,\|\psi\|^{\frac{6+p}{3(p-2)}}_{L_t^{\frac{4p}{3(p-2)}}W_x^{1,p}}.
\end{align}
Next, we estimate $\|\psi\|_{L_t^{\frac{8}{3}}W_x^{1,4}}$ by $\|\psi\|_{L_t^{\frac{4p}{3(p-2)}}W_x^{1,p}}$. Using H\"older's inequality and Sobolev embedding again we obtain that
\begin{align*}
\|\psi\|^{\frac{8}{3}}_{W_x^{1,4}}\leq \|\psi\|^{\frac{4p}{3(p-2)}}_{W_x^{1,p}}\|\psi\|^{\frac{4(p-4)}{3(p-2)}}_{H_x^1}\leq M(\|\psi_0\|_{H^1})\,\|\psi\|^{\frac{4p}{3(p-2)}}_{W_x^{1,p}}.
\end{align*}
Thus
\begin{align}
\|\psi\|_{L_t^{\frac{8}{3}}W_x^{1,4}}\leq M(\|\psi_0\|_{H^1})\,\|\psi\|^{\frac{p}{2(p-2)}}_{L_t^{\frac{4p}{3(p-2)}}W_x^{1,p}}\Rightarrow\|\psi\|^{\frac{5}{3}}_{L_t^{\frac{8}{3}}W_x^{1,4}}\leq M(\|\psi_0\|_{H^1})\,\|\psi\|^{\frac{5p}{6(p-2)}}_{L_t^{\frac{4p}{3(p-2)}}W_x^{1,p}}.
\end{align}
To sum up, we obtain from Duhamel's formula and the Strichartz estimates that
\begin{equation}\label{strichartz4}
\begin{aligned}
\|\psi\|_{L_t^{\frac{4p}{3(p-2)}}W_x^{1,p}}&\leq\|U(\cdot)\psi_0\|_{L_t^{\frac{4p}{3(p-2)}}W_x^{1,p}}+M\Big(\|\Psi_{|\psi|^2\psi}\|_{L_t^{\frac{4p}{3(p-2)}}W_x^{1,p}}\\
&\ \ +\|\Psi_{(K*|\psi|^2)\psi}\|_{L_t^{\frac{4p}{3(p-2)}}W_x^{1,p}}+\|\Psi_{|\psi|^{p-2}\psi}\|_{L_t^{\frac{4p}{3(p-2)}}W_x^{1,p}}\Big)\\
&\leq \mathrm C_{\frac{4p}{3(p-2)},p}\|\psi_0\|_{H^1}+M(\|\psi_0\|_{H^1})\,\Big(\|\psi\|^{\frac{5}{3}}_{L_t^{\frac{8}{3}}W_x^{1,4}}+\|\psi\|^{\frac{6+p}{3(p-2)}}_{L_t^{\frac{4p}{3(p-2)}}W_x^{1,p}}\Big)\\
&\leq \mathrm C_{\frac{4p}{3(p-2)},p}\|\psi_0\|_{H^1}+M(\|\psi_0\|_{H^1})\,\Big(\|\psi\|^{\frac{5p}{6(p-2)}}_{L_t^{\frac{4p}{3(p-2)}}W_x^{1,p}}+\|\psi\|^{\frac{6+p}{3(p-2)}}_{L_t^{\frac{4p}{3(p-2)}}W_x^{1,p}}\Big).
\end{aligned}
\end{equation}
Now define the function $f:[0,\infty)\to\R$ by
$$f(y)\defeq y-\mathrm C_{\frac{4p}{3(p-2)},p}\|\psi_0\|_{H^1}-M(\|\psi_0\|_{H^1})\,y^{\frac{5p}{6(p-2)}}-M(\|\psi_0\|_{H^1})\,y^{\frac{6+p}{3(p-2)}},$$
where $ M(\|\psi_0\|_{H^1})\strongly 0$ for $\|\psi_0\|_{H^1}\strongly 0$. Notice that $\frac{5p}{6(p-2)}>1$ and $\frac{6+p}{3(p-2)}>1$ for $p\in(4,6)$. Thus choosing $\|\psi_0\|_{H^1}$ sufficiently small, say $\|\psi_0\|<\delta$ for some $\delta>0$, there exist some positive constants $a,b$ with $0<a<b<\infty$ such that
$$\{y\in(0,\infty):f(y)<0\}\subseteq (0,a)\cup(b,\infty).$$
Since \eqref{strichartz4} is valid for all $t\in(0,\infty)$, $\|\psi\|_{L_t^{\frac{4p}{3(p-2)}}W_x^{1,p}}$ converges to zero as $t$ shrinks to zero and the mapping $t\mapsto \|\psi\|_{L_t^{\frac{4p}{3(p-2)}}W_x^{1,p}}$ is continuous, we obtain that
\begin{equation*}
\|\psi\|_{L_t^{\frac{4p}{3(p-2)}}W_x^{1,p}}\leq a,\ \text{ for all} \ t\in(0,\infty),
\end{equation*}
which implies that
\begin{alignat}{2}
&\|\psi\|_{L_\R^{\frac{4p}{3(p-2)}}W_x^{1,p}}\leq a.
\end{alignat}
Defining $g(\psi)\defeq \lambda_1|\psi|^2\psi+\lambda_2 (K*|\psi|^2)\psi+\lambda_3 |\psi|^{p-2}\psi$ and $v(t)\defeq U(-t)\psi(t)$, we obtain for $t,\tau\in(0,\infty)$ that
\begin{align*}
\|v(t)-v(\tau)\|_{H^1}\leq {}&M(\|\psi_0\|_{H^1})\,\|g(\psi)\|_{L_{(t,\tau)}^{\frac{4p}{p+6}}W_x^{1,\frac{p}{p-1}}}\notag\\
\leq{}& M(\|\psi_0\|_{H^1})\,\|\psi\|_{L_{(t,\tau)}^{\frac{4p}{3(p-2)}}W_x^{1,p}}^{\frac{5p}{6(p-2)}}+M\|\psi\|_{L_{(t,\tau)}^{\frac{4p}{3(p-2)}}W_x^{1,p}}^{\frac{6+p}{3(p-2)}}\strongly 0
\end{align*}
as $t,\tau\to\infty$, by dominated convergence. Therefore, $\{v(t)\}_{t\geq 0}$ is a Cauchy net in $H^1(\R^3;\C)$. We denote its $H^1$-limit by $\psi_+$. Then
\begin{align}
\lim_{t\to\infty}\|\psi(t)-\Psi(t)\psi_{+}\|_{H^1}=\lim_{t\to\infty}\|U(-t)\psi(t)-\psi_{+}\|_{H^1}=0.
\end{align}
This shows the existence of a scattering state $\psi_+$. Analogously we show a scattering state $\psi_-$ for $t\to-\infty$. This completes the proof.
\end{proof}
\section{Upper and lower estimates for the critical mass}
In this section we give some quantitative estimates on the regime where no standing waves exist. We also want to point out that such estimates imply that $c_b$ (which is given by \eqref{c_b}) is indeed finite (in the suitable parameter regime \eqref{case_a3}-\eqref{case_a4}). In the proof of Theorem \ref{Theorem 1} we will show that the regimes that we find in this section can be optimized to $(0,c_b)$ and $(c_b,\infty)$. We are however unable to provide with a closed form characterization for $c_b$.

We first construct $c_a$ such that $c_a\leq c_b$.
\begin{lemma}\label{Nonexistence of minimizers}
Let $p\in(4,6]$ and $(\lambda_1,\lambda_2,\lambda_3)\in\R^2\times (0,\infty)$. Then there exists some $c_a>0$, depending on $\lambda_1,\lambda_2,\lambda_3,p$, such that for all $c\in(0,c_a)$ we have $E(u)>0$ for all $u\in S(c)$ and $\gamma(c)=0$, i.e., $E(u)$ possesses no minimizer on $S(c)$. Moreover, if $(\lambda_1,\lambda_2)$ satisfies \eqref{case_a1} or \eqref{case_a2}, then $c_a=\infty$.
\end{lemma}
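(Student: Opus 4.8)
The plan is to prove that $E(u)>0$ for every $u\in S(c)$ once $c$ is small enough; the rest of the statement is then immediate, since $\gamma(c)\le0$ from Proposition~\ref{betaneq0} together with $E>0$ on $S(c)$ forces $\gamma(c)=0$, and a strictly positive $E$ with $\inf_{S(c)}E=0$ admits no minimizer. Throughout I will use that $u\in S(c)$ forces $A(u)=\|\nabla u\|_2^2>0$ (otherwise $u$ would be a nonzero constant in $L^2(\R^3)$, which is impossible).

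I would first dispose of the case $c_a=\infty$. If $(\lambda_1,\lambda_2)$ satisfies \eqref{case_a1} or \eqref{case_a2}, then from $\widehat K(\xi)\in[-\tfrac{4\pi}{3},\tfrac{8\pi}{3}]$ (see \eqref{rangeKhat}) one checks that $\lambda_1+\lambda_2\widehat K(\xi)\ge0$ for all $\xi\in\R^3$ — the minimum over $\xi$ being $\lambda_1-\tfrac{4\pi}{3}\lambda_2$ in case \eqref{case_a1} (where $\lambda_2\ge0$) and $\lambda_1+\tfrac{8\pi}{3}\lambda_2$ in case \eqref{case_a2} (where $\lambda_2<0$). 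Hence $B(u)\ge0$, so $E(u)=\tfrac12 A(u)+\tfrac12 B(u)+\tfrac2p C(u)\ge\tfrac12 A(u)>0$ for all $c>0$ and $u\in S(c)$ (using $\lambda_3>0$), and we may take $c_a=\infty$.

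For general $(\lambda_1,\lambda_2)$ with $\Xi\neq0$, I would rerun the estimates behind \eqref{infinity}: bounding $\tfrac12 B(u)\ge-\tfrac{\Xi}{2}\|u\|_4^4$ by \eqref{B_estimate}, $A(u)\ge(\mathrm C_1^{8/3}c^{1/3})^{-1}\|u\|_4^{8/3}$ by the Gagliardo–Nirenberg inequality \eqref{G-N} with $\sigma=1$, and $C(u)\ge\lambda_3 c^{-(p-4)/2}\|u\|_4^{2(p-2)}$ by the Hölder interpolation $\|u\|_4\le\|u\|_p^{p/(2(p-2))}\|u\|_2^{(p-4)/(2(p-2))}$ together with $\|u\|_2^2=c$, one obtains for every $u\in S(c)$
\begin{equation*}
E(u)\ \ge\ K_1\|u\|_4^{8/3}-K_2\|u\|_4^{4}+K_3\|u\|_4^{2(p-2)}\ =:\ \phi_c\bigl(\|u\|_4\bigr),
\end{equation*}
with $K_1=(2\mathrm C_1^{8/3})^{-1}c^{-1/3}$, $K_2=\Xi/2$ and $K_3=2\lambda_3 p^{-1}c^{-(p-4)/2}$. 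Since $\|u\|_4>0$ on $S(c)$, it suffices to show $\phi_c>0$ on $(0,\infty)$ for $c$ small. Setting $\mu:=2(p-2)-\tfrac83$ (so $\mu-\tfrac43=2(p-4)>0$ and $3\mu-4=6(p-4)>0$ because $p>4$) and dividing $\phi_c$ by $x^{8/3}$, the claim becomes $h_c(x):=K_1-K_2 x^{4/3}+K_3 x^{\mu}>0$ on $(0,\infty)$. Now $h_c(0^+)=K_1>0$, $h_c(+\infty)=+\infty$, and $h_c'(x)=x^{1/3}\bigl(-\tfrac43 K_2+\mu K_3 x^{\mu-4/3}\bigr)$ vanishes at the single point $x_*=\bigl(\tfrac{4K_2}{3\mu K_3}\bigr)^{1/(\mu-4/3)}$, which is therefore the global minimum of $h_c$; eliminating $K_3 x_*^{\mu}$ via $\mu K_3 x_*^{\mu-4/3}=\tfrac43 K_2$ gives
\begin{equation*}
h_c(x_*)=K_1-\frac{3\mu-4}{3\mu}\,K_2\,x_*^{4/3}.
\end{equation*}
Here $K_2$ is independent of $c$, while $K_1=a\,c^{-1/3}$ and $K_2/K_3=b\,c^{(p-4)/2}$ for explicit constants $a,b>0$ depending only on $\lambda_1,\lambda_2,\lambda_3,p$ and $\mathrm C_1$; hence $x_*^{4/3}\propto c^{1/3}$, so $h_c(x_*)=a\,c^{-1/3}-d\,c^{1/3}=c^{-1/3}(a-d\,c^{2/3})$ for some $d>0$. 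Thus $h_c(x_*)>0$ — and therefore $\phi_c>0$ on $(0,\infty)$ and $E>0$ on $S(c)$ — precisely for $c<c_a:=(a/d)^{3/2}$, which completes the argument.

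The only genuine point is the uniformity in $\|u\|_4$: the term $-K_2\|u\|_4^4$ cannot be absorbed into $\tfrac12 A(u)$ alone (that would give only $E(u)\ge\tfrac12 A(u)\bigl(1-\mathrm{const}\cdot c^{1/2}A(u)^{1/2}\bigr)$, which degenerates for large $A(u)$), so the supercritical term $C(u)>0$ must be retained and the one-variable minimization above carried out; everything else is the exponent bookkeeping already present in \eqref{infinity}.
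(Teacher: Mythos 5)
Your argument is correct, and it rests on exactly the same three estimates as the paper's proof: the optimal Gagliardo--Nirenberg inequality of Theorem \ref{optimal_G-N}, the bound $|B(u)|\le\Xi\|u\|_4^4$ from \eqref{B_estimate}, and the H\"older interpolation $\|u\|_4^{2(p-2)}\le c^{(p-4)/2}\|u\|_p^p$; the resulting lower bound for $E$ in terms of $\|u\|_4$ is precisely the one in \eqref{infinity}. Where you diverge is the endgame. The paper splits into the cases $\|u\|_4\ge1$ (where the $C$-term is played against the $B$-term directly, giving the constraint $c<(16\lambda_3^2/(p^2\Xi^2))^{1/(p-4)}$) and $\|u\|_4<1$ (an argument by contradiction: assuming $E(u)\le0$ forces $A(u)$ to be simultaneously large, via Gagliardo--Nirenberg, and bounded by $\Xi$, which fails for $c\le\Xi^{-3}\mathrm C_1^{-8}$), and then takes $c_a$ as the minimum of the two thresholds. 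You instead keep all three terms at once, divide by $\|u\|_4^{8/3}$, and carry out a clean one-variable minimization, exploiting that the minimum value scales like $a\,c^{-1/3}-d\,c^{1/3}$; this removes the case distinction and the threshold $\|u\|_4=1$, and yields a single explicit admissible value $c_a=(a/d)^{3/2}$. The two routes produce different constants, but that is immaterial: the lemma only asks for \emph{some} $c_a>0$, and the sharp threshold $c_b$ is identified later via the monotonicity and concavity of $\gamma$. Your treatment of the case \eqref{case_a1}--\eqref{case_a2} (namely $\lambda_1+\lambda_2\widehat K\ge0$, hence $B\ge0$ and $E\ge\tfrac12A>0$, so $c_a=\infty$) coincides with the paper's. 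Also, your closing remark is on point: absorbing the quartic term into the kinetic term alone cannot work uniformly in $A(u)$, so retaining the defocusing term $C(u)$ is indeed the essential mechanism here, exactly as in the paper.
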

\begin{proof}
If $B(u)$ is nonnegative, then we have already $E(u)>0$, so let us assume that $B(u)<0$. We then discuss two cases: $\|u\|_4\geq 1$ and $\|u\|_4<1$. First assume that $\|u\|_4\geq 1$. We obtain from H\"older's inequality that
\begin{align*}
\|u\|_4^4\leq \|u\|_4^{2(p-2)}\leq\|u\|^p_p\|u\|_2^{p-4}=c^{\frac{p-4}{2}}\,\|u\|_p^p.
\end{align*}
Also recall estimate \eqref{B_estimate}; we obtain that
\begin{align*}
E(u)=&\frac{1}{2}A(u)+\frac{1}{2}B(u)+\frac{2}{p}C(u)\\
\geq &\frac{1}{2}A(u)+\frac{2\lambda_3}{p}\|u\|_p^p-\frac{\Xi}{2}\|u\|_4^4\\
\geq &\frac{1}{2}A(u)+\Big(\frac{2\lambda_3c^{-\frac{p-4}{2}}}{p}-\frac{\Xi}{2}\Big)\|u\|_4^4>0
\end{align*}
for $c< \left(\frac{16\lambda_3^2}{p^2\Xi^2}\right)^{\frac{1}{p-4}}$. Now let $\|u\|_4<1$. Recall the Gagliardo-Nirenberg inequality
$$\|u\|^4_4\leq \mathrm C_1^4\,\|\nabla u\|_2^{3}\,\|u\|_2=\mathrm C_1^4\, c^{1/2}\, A(u)^{3/2},$$
(where $\mathrm C_1$ is an optimal constant given by Theorem \ref{optimal_G-N}). Suppose that $E(u)\leq 0$. We obtain that
\begin{align*}
\frac{1}{2}A(u)-\frac{\Xi \mathrm C_{1}^4c^{1/2}}{2}A(u)^{3/2}\leq \frac{1}{2}A(u)+\frac{1}{2}B(u)=E(u)-\frac{2}{p}C(u)<0,
\end{align*}
which implies that
$$A(u)>\Xi^{-2}\mathrm C_{1}^{-8}c^{-1}.$$
On the other hand,
\begin{align*}
0\geq E(u)=\frac{1}{2}A(u)+\frac{1}{2}B(u)+\frac{2}{p}C(u)>\frac{1}{2}A(u)-\frac{\Xi}{2}\|u\|_4^4\geq \frac{1}{2}A(u)-\frac{\Xi}{2},
\end{align*}
since $\|u\|_4\leq 1$. Thus letting
$$ \frac{1}{2}\Xi^{-2}\mathrm C_{1}^{-8}c^{-1}\geq \frac{\Xi}{2}\Leftrightarrow c\leq \Xi^{-3}\mathrm C_{1}^{-8},$$
we obtain the contradiction $0>0$. Then
$$c_a\defeq \min\left\{\left(\frac{16\lambda_3^2}{p^2\Xi^2}\right)^{\frac{1}{p-4}},\Xi^{-3}\mathrm C_{1}^{-8}\right\},$$
where $\mathrm C_1$ is given by Theorem \ref{optimal_G-N}, satisfies the assertion of the lemma. In particular, if \eqref{case_a1} or \eqref{case_a2} is satisfied, $B(u)$ is always nonnegative, thus $c_a=\infty$.
\end{proof}

In what follows, we construct $c_c$, depending only on $\lambda_1,\lambda_2,\lambda_3,p$, such that $\gamma(c_c)<0$. The construction is implicit and we present it for the cases $p=5,6$. From the monotonicity of the infimum function (which will be proven in the next section) we obtain that $c_b\leq c_c$.

 The main idea is to calculate the energy of a Gaussian and tune the parameters such that it becomes negative; all calculations were made with Mathematica 10.2. To that end, for $\sigma,\tau,c>0$ define
 $$u_{\sigma,\tau,c}(x)\defeq \sqrt{\frac{8 c}{\pi ^{3/2} \sigma ^2 \tau }} \exp \left(-2 \left(\frac{x_1^2+x_2^2}{\sigma ^2}+\frac{x_3^2}{\tau ^2}\right)\right)$$
 and note that $u_{\sigma,\tau,c}\in S(c)$. One obtains that
 \begin{align*}
  E(u_{\sigma,\tau,c})={}&\tilde A(\sigma,\tau)\,c+\tilde B_{\lambda_1,\lambda_2}(\sigma,\tau)\,c^2+\tilde C_{\lambda_3,p}(\sigma,\tau)\,c^{p/2},
 \end{align*}
 where
 $$\tilde A(\sigma,\tau)\defeq \frac{2}{\sigma ^2}+\frac{1}{\tau ^2},\quad \tilde C_{\lambda_3,p}(\sigma,\tau)\defeq\frac{\lambda_3  \,2^{1+3 (p-1)/2}   }{\pi ^{3 (p-2)/4}\,p^{5/2}\,\sigma ^{p-2}\, \tau^{(p-2)/2}}$$
 and
 \begin{equation*}
  \tilde B_{\lambda_1,\lambda_2}(\sigma,\tau)\defeq \left\{
  \begin{aligned}
   \frac{\sqrt{2}}{\pi ^{3/2}\left( \sigma^2 -\tau ^2\right)}\left(\frac{1}{\tau }\Big(\lambda_1 +\frac{8}{3} \pi  \lambda_2\Big)-\frac{\tau}{\sigma^2}\Big(\lambda_1-\frac{4}{3} \pi  \lambda_2 \Big)-\frac{4 \pi  \lambda_2  \coth ^{-1}\left(\frac{\tau }{\sqrt{\tau ^2-\sigma ^2}}\right)}{\sqrt{\tau ^2-\sigma ^2}}\right)\\
   \text{ if }\tau>\sigma,\\[2em]
   \frac{\sqrt{2}}{\pi ^{3/2}\left( \sigma^2 -\tau ^2\right)}\left(\frac{1}{\tau }\Big(\lambda_1 +\frac{8}{3} \pi  \lambda_2\Big)-\frac{\tau}{\sigma^2}\Big(\lambda_1-\frac{4}{3} \pi  \lambda_2 \Big)-\frac{4 \pi  \lambda_2  \cot^{-1}\left(\frac{\tau }{\sqrt{\sigma ^2-\tau ^2}}\right)}{\sqrt{\sigma ^2-\tau ^2}}\right)\\
   \text{ if }\tau<\sigma.\\
  \end{aligned}
  \right.
 \end{equation*}

 We first treat the case $p=5$. The inequality $a_1 c + a_2 c^2 + a_3 c^{5/2} < 0$ with positive coefficients $a_1$ and $a_3$ is satisfied for $c\in (P_2,P_3)$, where $P_i$ denotes the $i$th root of the polynomial
 $$P(s)\defeq a_3^2\,s^3- a_2^2\,s^2-2a_1 a_2\, s -a_1^2,$$
 provided $a_1<-\frac{4 a_2^3}{27 a_3^2}$. Defining $a_1,a_2,a_3$ by the expression for the energy above and setting $\sigma=\sqrt{\tau}$, we find that
 $$\lim_{\tau\rightarrow\infty}\left( -\frac{4 \tilde B_{\lambda_1,\lambda_2}(\sqrt{\tau},\tau)^3}{27 \tilde C_{\lambda_3,p}(\sqrt{\tau},\tau)^2}\right)=-\frac{3125 (3 \lambda_1-4 \pi  \lambda_2)^3}{110592 \sqrt{2} \lambda_3^2}>0$$
 for $\lambda_1,\lambda_2$ satisfying \eqref{case_a3}. Since $\tilde A(\sqrt{\tau},\tau)$ becomes arbitrarily small for $\tau$ arbitrarily large, there exists $\tau$ sufficiently large such that $E(u_{\sqrt{\tau},\tau,c})<0$.

 For $\lambda_1,\lambda_2$ satisfying \eqref{case_a4}, we rescale like $\tau=\sqrt{\sigma}$ to find
 $$\lim_{\sigma\rightarrow\infty}\left( -\frac{4 \tilde B_{\lambda_1,\lambda_2}(\sigma,\sqrt{\sigma})^3}{27 \tilde C_{\lambda_3,p}(\sigma,\sqrt{\sigma})^2}\right)=-\frac{3125 (3 \lambda_1+8 \pi  \lambda_2 )^3}{110592 \sqrt{2} \lambda_3 ^2}>0,$$
 so that $E(u_{\sigma,\sqrt{\sigma},c})<0$ for $\sigma$ sufficiently large.

 Concerning the case $p=6$, solving the inequality $a_1 c + a_2 c^2 + a_3 c^{5/2} < 0$ for positive $c$, is equivalent to having  two distinct roots for the binomial $P(c)\defeq a_1 + a_2 c + a_3 c^{2}$. This happens for $a_2<0$ and $a_2^2>4a_1\,a_3$. Calculating
 $$\lim_{\sigma\nearrow \tau}\big(\tilde B_{\lambda_1,\lambda_2}(\sigma,\tau)^2-4\tilde A(\sigma,\tau)\,\tilde C_{\lambda_3,p}(\sigma,\tau)\big)=\frac{2 \lambda_1^2}{\pi ^3 \tau ^6}-\frac{1536 \lambda_3 }{25 \sqrt{5} \pi ^{9/4}\,\tau^{13/2}},$$
 we see that the positive term dominates the negative one for $\tau$ sufficiently small. Thus, there exist $\tau$ small enough and $\sigma<\tau$ sufficiently close to $\tau$ such that
 $$\tilde B_{\lambda_1,\lambda_2}(\sigma,\tau)^2-4\tilde A(\sigma,\tau)\,\tilde C_{\lambda_3,p}(\sigma,\tau)>0$$
 and thus $E(u_{\sigma,\tau,c})<0$.

\section{Monotonicity and concavity of the infimum function}
The following construction is originally proved in \cite{BellazziniJeanjean2016}.
\begin{lemma}\label{bu<0}
Let \eqref{case_a3} or \eqref{case_a4} be satisfied. Then for each $c>0$ there exists some $u\in S(c)$ such that $B(u)<0$.
\end{lemma}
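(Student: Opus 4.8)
The plan is to exhibit, for each $c>0$, an explicit one-parameter family $\{u_L\}\subset S(c)$ along which $B(u_L)$ eventually turns negative. The mechanism is that $\widehat K$ is homogeneous of degree zero and attains its extreme values $-\tfrac{4\pi}{3}$ and $\tfrac{8\pi}{3}$ \emph{identically} on the plane $\{\xi_3=0\}$ and on the line $\{\xi_1=\xi_2=0\}$, respectively; consequently, under \eqref{case_a3} the weight $\lambda_1+\lambda_2\widehat K(\xi)$ is strictly negative on $\{\xi_3=0\}$, and under \eqref{case_a4} it is strictly negative on $\{\xi_1=\xi_2=0\}$. So it is enough to choose $u_L$ with $|u_L|^2$ degenerating anisotropically in physical space so that $\widehat{|u_L|^2}$ concentrates, in the $L^2$-sense, onto the relevant subspace of frequency space.

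Carrying this out in case \eqref{case_a3}: fix a positive Gaussian $\phi\in H^1(\R^3)$ with $\lnorm\phi\rnorm_2^2=c$, put $\rho_0\defeq\phi^2$, and set $u_L(x)\defeq L^{-1/2}\phi(x_1,x_2,x_3/L)$ for $L>0$. A change of variables shows $u_L$ is again a Gaussian with $\lnorm u_L\rnorm_2^2=c$, so $u_L\in S(c)\subset\Sigma$, and $\widehat{|u_L|^2}(\xi)=\widehat{\rho_0}(\xi_1,\xi_2,L\xi_3)$. Substituting $\eta_3=L\xi_3$ in the definition of $B$ and exploiting the $0$-homogeneity of $\widehat K$ gives
$$
L\,B(u_L)=\frac{1}{(2\pi)^3}\int_{\R^3}\Big(\lambda_1+\lambda_2\,\widehat K(\xi_1,\xi_2,\eta_3/L)\Big)\big|\widehat{\rho_0}(\xi_1,\xi_2,\eta_3)\big|^2\,d\xi .
$$
Since $\widehat K(\xi_1,\xi_2,\eta_3/L)\to\widehat K(\xi_1,\xi_2,0)=-\tfrac{4\pi}{3}$ for every $(\xi_1,\xi_2)\neq(0,0)$ while $|\widehat K|\le\tfrac{8\pi}{3}$ throughout, dominated convergence (the set $\{\xi_1=\xi_2=0\}$ being Lebesgue-null) together with Plancherel yields $\lim_{L\to\infty}L\,B(u_L)=(\lambda_1-\tfrac{4\pi}{3}\lambda_2)\lnorm\rho_0\rnorm_2^2<0$ by \eqref{case_a3}; hence $B(u_L)<0$ for all $L$ sufficiently large. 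For case \eqref{case_a4} one repeats the argument with $u_L(x)\defeq L^{-1}\phi(x_1/L,x_2/L,x_3)$, whose squared modulus has Fourier transform concentrating near the $\xi_3$-axis, and obtains $\lim_{L\to\infty}L^2\,B(u_L)=(\lambda_1+\tfrac{8\pi}{3}\lambda_2)\lnorm\rho_0\rnorm_2^2<0$, using $\widehat K(\eta_1/L,\eta_2/L,\xi_3)\to\widehat K(0,0,\xi_3)=\tfrac{8\pi}{3}$.

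I do not anticipate a genuine obstacle: the construction is the one from \cite{BellazziniJeanjean2016}. The only items demanding a little care are the bookkeeping that the anisotropic rescaling preserves the $L^2$-norm and keeps $u_L$ in $\Sigma$ (immediate for Gaussians) and the limit passage inside the integral, which is routine because $\widehat K\in L^\infty$ and the exceptional set of frequencies is null. Alternatively one could argue directly from the explicit Gaussian energy expression recorded earlier, letting the anisotropy parameter tend to its limit, but the soft argument above avoids all special-function manipulations.
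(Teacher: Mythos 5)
Your proof is correct and is essentially the paper's argument (itself borrowed from \cite{BellazziniJeanjean2016}): an anisotropic $L^2$-preserving rescaling whose effect on the Fourier side, via the $0$-homogeneity of $\widehat K$, drives the weight $\lambda_1+\lambda_2\widehat K$ pointwise to its negative extreme value ($\lambda_1-\tfrac{4\pi}{3}\lambda_2$ or $\lambda_1+\tfrac{8\pi}{3}\lambda_2$), followed by dominated convergence. The only cosmetic differences are that the paper uses the scaling $u_t(x)=t^{5/4}u(tx_1,tx_2,\sqrt{t}x_3)$ on a general $u\in S(c)$ and writes out only case \eqref{case_a3}, whereas you stretch a fixed Gaussian in one direction and treat both cases explicitly.
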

\begin{proof}
 We only consider the case \eqref{case_a3}, namely $\lambda_2\geq 0$ and $\lambda_1-\frac{4\pi}{3}\lambda_2<0$, the case \eqref{case_a4} can be dealt similarly. We define the following scaling
\begin{align*}
u_t(x) \defeq t^{\frac{5}{4}}u(tx_1,t x_2,\sqrt{t}x_3)
\end{align*}
for $t>0$. Then
$$B(u_t)=\frac{t^{\frac{5}{2}}}{(2\pi)^3}\int_{\R^3}\Big(\lambda_1+\frac{4\pi}{3}\lambda_2\frac{2t\xi_3^2-t^2\xi_1^2-t^2\xi_2^2}{t^2\xi_1^2+t^2\xi_2^2+t^2\xi_3^2}\Big)\,\big|\widehat{|u(\xi)|^2}\big|^2\;d\xi. $$
Letting $t\to\infty$ we obtain that
\begin{align*}
\lim_{t\to\infty}\lambda_1+\frac{4\pi}{3}\lambda_2\frac{2t\xi_3^2-t^2\xi_1^2-t^2\xi_2^2}{t^2\xi_1^2+t^2\xi_2^2+t^2\xi_3^2}=\lambda_1-\frac{4\pi}{3}\lambda_2<0.
\end{align*}
Then using the dominated convergence theorem we obtain that there exists some sufficiently large $t$ such that $B(u_t)<0$. This completes the proof.
\end{proof}

We now prove the monotonicity of the infimum function. In order to do so, we need to work with a rescaling that leaves the highest order term invariant. We are then able to study how the energy varies with respect to the $L^2$-norm, in order to obtain the result.
\begin{lemma}\label{strict decreasing}
Let $\lambda_3>0$ and $p\in(4,6]$. Then $c\mapsto \gamma(c)$ is nonincreasing on $(0,\infty)$. In particular, there exists some $c_0\in(0,\infty)$ such that $\gamma(c_0)<0$. Moreover, for each $c_0\in(0,\infty)$ satisfying $\gamma(c_0)<0$, the function $c\mapsto \gamma(c)$ is strictly decreasing on $[c_0,\infty)$.
\end{lemma}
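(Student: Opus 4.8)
Following the strategy announced just before the statement, the plan is to use a rescaling that leaves the top--order term $C$ invariant while moving the $L^2$--constraint sphere, and then to read off how $E$ responds as the mass varies. For $\theta>0$ and $u\in S(c)$ I would set
$$\mathcal T_\theta u(x)\defeq\theta^{-\frac{1}{p-2}}\,u\big(\theta^{-\frac{p}{3(p-2)}}\,x\big),$$
so that $\mathcal T_\theta u\in S(\theta c)$. A change of variables in physical and Fourier space, using the degree--$0$ homogeneity of $\widehat K$, then gives $A(\mathcal T_\theta u)=\theta^{\alpha}A(u)$, $B(\mathcal T_\theta u)=\theta^{\beta}B(u)$ and $C(\mathcal T_\theta u)=C(u)$, with $\alpha\defeq\frac{p-6}{3(p-2)}\le0$ and $\beta\defeq\frac{p-4}{p-2}>0$ for $p\in(4,6]$. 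Recalling that $E(u)=\tfrac12A(u)+\tfrac12B(u)+\tfrac2pC(u)$ with $A(u),C(u)\ge0$ (hence $B(u)\le 2E(u)$), this produces the identity
$$E(\mathcal T_\theta u)=E(u)+\tfrac12(\theta^{\alpha}-1)A(u)+\tfrac12(\theta^{\beta}-1)B(u).$$

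From here monotonicity and strict decrease would follow in one shot. Fixing $0<c_1<c_2$ and $\theta\defeq c_2/c_1>1$, for every $u\in S(c_1)$ the $A$--term above is nonpositive ($\alpha\le0$, $\theta>1$, $A(u)\ge0$), and since $\theta^{\beta}-1>0$ and $B(u)\le2E(u)$,
$$E(\mathcal T_\theta u)\le E(u)+\tfrac12(\theta^{\beta}-1)B(u)\le\theta^{\beta}E(u);$$
as $\mathcal T_\theta u\in S(c_2)$, taking the infimum over $u\in S(c_1)$ gives $\gamma(c_2)\le\theta^{\beta}\gamma(c_1)=(c_2/c_1)^{\beta}\gamma(c_1)$. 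Since $\gamma\le0$ on $(0,\infty)$ by Proposition \ref{betaneq0}, the right--hand side is $\le\gamma(c_1)$, so $\gamma$ is nonincreasing; and it is strictly below $\gamma(c_1)$ as soon as $\gamma(c_1)<0$. Once some $c_0$ with $\gamma(c_0)<0$ is available, monotonicity forces $\gamma(c)\le\gamma(c_0)<0$ on $[c_0,\infty)$, and applying the strict inequality to any pair $c_0\le c_1<c_2$ yields the last assertion of the lemma.

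It remains to exhibit one such $c_0$; this makes sense only under \eqref{case_a3}--\eqref{case_a4} (in cases \eqref{case_a1}--\eqref{case_a2} one has $B\ge0$, hence $E>0$ on every $S(c)$ and $\gamma\equiv0$). Assuming \eqref{case_a3} or \eqref{case_a4}, I would invoke Lemma \ref{bu<0} to choose $u\in S(1)$ with $B(u)<0$ and look at
$$E(\mathcal T_\theta u)=\tfrac12\theta^{\alpha}A(u)+\tfrac12\theta^{\beta}B(u)+\tfrac2pC(u).$$
Here $\theta^{\alpha}A(u)$ stays bounded (nonincreasing in $\theta$ for $p<6$, constant for $p=6$), $C(u)$ is fixed, and $\theta^{\beta}B(u)\to-\infty$ because $\beta>0$ and $B(u)<0$, so $E(\mathcal T_\theta u)\to-\infty$ as $\theta\to\infty$. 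Picking $\theta$ with $E(\mathcal T_\theta u)<0$ and recalling $\mathcal T_\theta u\in S(\theta)$, I may take $c_0=\theta$.

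I do not anticipate a serious obstacle: the whole proof is short once the $C$--preserving, mass--changing rescaling is identified. The one point that genuinely matters is that the scaling exponent $\beta$ on $B$ is \emph{strictly} positive --- this is exactly where the homogeneity of $\widehat K$ and the subcriticality $p>4$ enter, and it is what turns the nonincreasing property into a strictly decreasing one via the trivial bound $B(u)\le2E(u)$. The only real nuisances I foresee are bookkeeping the four scaling exponents and remembering that the existence--of--$c_0$ step tacitly lives under \eqref{case_a3}--\eqref{case_a4}; one should also note that $\beta\to0$ as $p\to4^{+}$, so the quantitative rate $(c_2/c_1)^{\beta}$ in the strict--decrease estimate degenerates at the lower end of the admissible range.
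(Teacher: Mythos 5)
Your proposal is correct, and its backbone is the same as the paper's: your $\mathcal T_\theta$ is exactly the paper's rescaling ${}^{t}u(x)=t^{-3/p}u(t^{-1}x)$ reparametrized by the mass ratio $\theta=t^{(3p-6)/p}$, your exponents $\alpha,\beta$ match $1-\tfrac6p$ and $3-\tfrac{12}{p}$ after this change of parameter, and your construction of a $c_0$ with $\gamma(c_0)<0$ (Lemma \ref{bu<0} plus letting the $B$-term run to $-\infty$) is the paper's verbatim, including the tacit restriction to \eqref{case_a3}--\eqref{case_a4}, which you rightly flag. Where you genuinely diverge is the monotonicity step: the paper first disposes of \eqref{case_a1}--\eqref{case_a2}, passes to a minimizing sequence with $B(u_n)<0$, and for \emph{strict} decrease runs a contradiction argument (showing $\lim B(u_n)\in(-\infty,0)$, producing $t_n\ge1$ with $E({}^{t_n}u_n)=\gamma(c_1)$, proving $t_n\to1$, and perturbing to $t_n+\varepsilon_n$), whereas you obtain the single quantitative inequality $\gamma(c_2)\le(c_2/c_1)^{\beta}\gamma(c_1)$ directly from the elementary pointwise bound $B(u)\le 2E(u)$ (valid since $A,C\ge0$) together with $\gamma\le0$ from Proposition \ref{betaneq0}. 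This buys you both the nonincreasing property and strict decrease past any $c_0$ with $\gamma(c_0)<0$ in one stroke, with no case distinction, no minimizing sequences, and even an explicit decay rate $(c_2/c_1)^{(p-4)/(p-2)}$; the paper's longer route yields no extra information for this lemma (its sequence machinery is only really needed again in the concavity/dichotomy analysis), so your argument is a legitimate simplification. The only caveat is the one you already note: the assertion that some $c_0$ with $\gamma(c_0)<0$ exists is false under \eqref{case_a1}--\eqref{case_a2} (where $\gamma\equiv0$), so, like the paper's own proof, that part of your argument lives under \eqref{case_a3}--\eqref{case_a4}.
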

\begin{proof}
We first define the following scaling
\begin{align}\label{scaling_b1}
{}^{t}u(x) \defeq t^{-3/p}u(t^{-1}x).
\end{align}
We obtain that
\begin{equation}\label{scaling_b2}
\begin{aligned}
\|{}^{t}u\|_2^2&=t^{3-\frac{6}{p}}\|u\|_2^2,\\
A({}^{t}u)&=t^{1-\frac{6}{p}}A(u),\\
B({}^{t}u)&=t^{3-\frac{12}{p}}B(u),\\
C({}^{t}u)&=C(u),
\end{aligned}
\end{equation}
so that it holds
$$ E({}^{t}u)=\frac{1}{2}t^{1-\frac{6}{p}}A(u)+\frac{1}{2}t^{3-\frac{12}{p}}B(u)+\frac{2}{p}C(u).$$
Due to Lemma \ref{bu<0}, for a given $c>0$ we can find some $u\in S(c)$ with $B(u)<0$. Since
\begin{align*}
1\leq \frac{6}{p}\ \text { and }\ 3>\frac{12}{p},
\end{align*}
we obtain that $E({}^{t}u)\rightarrow-\infty$ for $t\rightarrow \infty$ and $t\mapsto E({}^{t}u)$ is strictly decreasing on $t\in[1,\infty)$. This shows the existence of some $c_0\in(0,\infty)$ with $\gamma(c_0)<0$.

Next we show that $c\mapsto \gamma(c)$ is nonincreasing on $(0,\infty)$. Since $\gamma(c)=0$ for all $c\in(0,\infty)$ if $\lambda_1$ and $\lambda_2$ satisfy \eqref{case_a1} or \eqref{case_a2}, we need only consider the cases \eqref{case_a3} and \eqref{case_a4}. Let $0<c_1<c_2<\infty$ be given. If $\gamma(c_1)=0$, then nothing has to be shown since $\gamma(c)\leq 0$ due to Proposition \refpart{betaneq0}{1} We thus assume that $\gamma(c_1)<0$. Let $\{u_n\}_{n\in\N}\subset S(c_1)$ be a minimizing sequence, i.e., $E(u_n)=\gamma(c_1)+o(1)$. Up to a subsequence we can thus assume that $B(u_n)<0$ for all $n\in \N$. Letting $t=(\frac{c_2}{c_1})^{\frac{p}{3p-6}}>1$ (since $\frac{p}{3p-6}\Leftrightarrow p>3$), we obtain that $\|{}^{t}u_n\|_2^2=c_2$. Due to the strict deceasing monotonicity of $t\mapsto E({}^{t}u_n)$ in $[1,\infty)$ we obtain that
$$ \gamma(c_1)+o(1)=E(u_n)\geq E({}^{t}u_n)\geq \gamma(c_2),$$
which implies that
$$\gamma(c_1)\geq \gamma(c_2) $$
and this completes the proof of nonincreasing monotonicity of $c\mapsto \gamma(c)$ on $(0,\infty)$.

Now let $\gamma(c_0)<0$. Supposing that $c\mapsto \gamma(c)$ is not strictly decreasing on $[c_0,\infty)$, we can find two points $c_1$ and $c_2$ with $c_0\leq c_1<c_2<\infty$ and $\gamma(c_1)=\gamma(c_2)$. Then from the nonincreasing monotonicity of $c\mapsto\gamma(c)$ we obtain that \begin{align}\label{contradiction}
\gamma(c)=\gamma(c_1)=\gamma(c_2)<0
\end{align}
for all $c\in[c_1,c_2]$. Again we let $\{u_n\}_{n\in\N}\subset S(c_1)$ be a minimizing sequence in $S(c_1)$. We obtain from the scaling \eqref{scaling_b2} that
\begin{align}\label{difference of aun and bun}
E({}^{t}u_n)=E(u_n)+\frac{1}{2}(t^{1-\frac{6}{p}}-1)A(u_n)+\frac{1}{2}(t^{3-\frac{12}{p}}-1)B(u_n).
\end{align}
We again assume that $B(u_n)<0$ for all $n\in\N$. Moreover, it holds that
$$\sigma \defeq \lim_{n\to\infty} B(u_n)\in(-\infty,0),$$
since: (i) if $\liminf_{n\to\infty} B(u_n)=-\infty$, then we arrive to a contradiction as in the proof of Proposition \refpart{betaneq0}{1}; (ii) if $\liminf_{n\to\infty} B(u_n)\geq 0$, then we obtain
\begin{align}\label{minimizing equal zero}
0>\gamma(c_1)=\lim_{n\to\infty}E(u_n)=\frac{1}{2} A(u_n)+\frac{1}{2}B(u_n)+\frac{3}{5}C(u_n)\geq 0,
\end{align}
a contradiction. Since $E({}^{t}u_n)\strongly -\infty$ for $t\strongly+\infty$ and $t\mapsto E({}^{t}u_n)$ is strictly decreasing on $t\in[1,\infty)$, we obtain the existence of some $t_n\in[1,\infty)$ with $E({}^{t_n}u_n)=\gamma(c_1)$. Hence
\begin{align}\label{aun+bun}
o(1)&=\gamma(c_1)-E(u_n)\nonumber\\
&=\frac{1}{2}(t_n^{1-\frac{6}{p}}-1)A(u_n)+\frac{1}{2}(t_n^{3-\frac{12}{p}}-1)B(u_n),\nonumber\\
&\leq \frac{1}{2}(t_n^{3-\frac{12}{p}}-1)B(u_n)\eqdef l_n<0.
\end{align}
We obtain that $l_n=o(1)$. But since $B(u_n)\neq o(1)$ we infer that $t_n\strongly 1$ for $n\rightarrow \infty$. Now fix $c\in(c_1,c_2)$. If there is some $t_n=1$, then $u_n$ is a minimizer, and we obtain for $t=(c/c_1)^{\frac{p}{3p-6}}$ that $\|{}^{t}u_n\|_2^2=c$ and
$$\gamma(c)\leq E({}^{t}u_n)<E(u_n)=\gamma(c_1),$$
which contradicts \eqref{contradiction}. Thus $t_n>1$ for all $n\in\N$. Since $\{t_n\}_{n\in\mathbb N}$ converges to $1$, we can find some sufficiently large $n\in\N$ and sufficiently small $\varepsilon_n>0$ such that $\hat{c}=(t_n+\varepsilon_n)^{\frac{3p-6}{p}}c_1\in(c_1,c_2)$ and
\begin{align*}
\|{}^{t_n+\varepsilon_n}u_n\|_2^2&=\hat{c}\in(c_1,c_2)\text{ and } \gamma(\hat{c})\leq E({}^{t_n+\varepsilon_n}u_n)<E({}^{t_n}u_n)=\gamma(c),
\end{align*}
which contradicts \eqref{contradiction} again. This completes the proof.
\end{proof}

The concavity of the infimum function is proven using symmetry arguments like the ones given in \cite{Maris2016}, as well as an integral identity which was crucial for proving symmetry for minimizers in \cite{LopesMaris2008}.
\begin{lemma}\label{maris}
Let $\lambda_3>0$ and $p\in(4,6]$. Then
\begin{enumerate}
\item The function $\gamma:[0,\infty)\strongly\mathbb R$ is concave and thus continuous.
\item If there exist $c_1,c_2\in(0,\infty)$ with $\gamma(c_1+c_2)=\gamma(c_1)+\gamma(c_2)$, then $\gamma$ is linear in $[0,c_1+c_2]$.
\end{enumerate}
\end{lemma}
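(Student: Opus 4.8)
My plan is to establish part \emph{(1)} by first proving midpoint concavity via a cutting-and-reflecting construction in the spirit of \cite{Maris2016} and \cite{LopesMaris2008}, upgrading it to concavity (and hence continuity), and then deducing part \emph{(2)} formally from concavity. So the first goal is the inequality $\gamma\big(\tfrac{x+y}{2}\big)\ge\tfrac12(\gamma(x)+\gamma(y))$ for all $x,y>0$. Set $c:=\tfrac{x+y}{2}$ and pick $u_n\in S(c)$ with $E(u_n)\le\gamma(c)+\tfrac1n$. Fix a unit vector $\nu$; since $t\mapsto\int_{\{z\cdot\nu<t\}}|u_n|^2$ is continuous and runs from $0$ to $c$, there is an offset $t_n$ for which the two pieces $u_n\mathbf 1_{\{z\cdot\nu<t_n\}}$ and $u_n\mathbf 1_{\{z\cdot\nu>t_n\}}$ carry masses $x/2$ and $y/2$. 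Let $\sigma$ be the reflection across $H_n:=\{z\cdot\nu=t_n\}$ and let $v_n^{(1)}$ equal $u_n$ on $\{z\cdot\nu<t_n\}$ extended by $u_n\circ\sigma$ on the complement, and $v_n^{(2)}$ analogously from the other piece. As reflecting an $H^1$ function across a hyperplane produces no interfacial contribution, $v_n^{(1)},v_n^{(2)}\in H^1(\R^3)$ with $\|v_n^{(1)}\|_2^2=x$ and $\|v_n^{(2)}\|_2^2=y$.

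Next I would compare energies. All local terms and the $\lambda_1\|\cdot\|_4^4$ part of $B$ split exactly with a factor $2$:
$$A(v_n^{(1)})+A(v_n^{(2)})=2A(u_n),\qquad \|v_n^{(1)}\|_q^q+\|v_n^{(2)}\|_q^q=2\|u_n\|_q^q\quad(q=4,p),$$
so in particular $C(v_n^{(1)})+C(v_n^{(2)})=2C(u_n)$. For the genuinely nonlocal part, set $\mathcal B(f):=\iint K(z-w)f(z)f(w)\,dz\,dw$ so that $B(u)=\lambda_1\|u\|_4^4+\lambda_2\mathcal B(|u|^2)$, and write $f:=|u_n|^2=f_1+f_2$ for the split across $H_n$. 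Then $|v_n^{(1)}|^2=f_1+\sigma f_1$ and $|v_n^{(2)}|^2=f_2+\sigma f_2$, and I would invoke the reflection identity of \cite{LopesMaris2008}, which uses only that $K$ is even and $\sigma$-invariant:
$$\mathcal B(f_1+\sigma f_1)+\mathcal B(f_2+\sigma f_2)=2\,\mathcal B(f)+2\,\mathcal R\big(f_1-\sigma f_2\big),\qquad \mathcal R(\phi):=\iint_{H_n^-\times H_n^-}\!K(z-\sigma w)\,\phi(z)\phi(w)\,dz\,dw .$$
Assembling the pieces yields the clean formula $E(v_n^{(1)})+E(v_n^{(2)})=2E(u_n)+\lambda_2\,\mathcal R(f_1-\sigma f_2)$.

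The main obstacle is controlling the cross term $\lambda_2\,\mathcal R(f_1-\sigma f_2)$, since the dipole kernel is not sign-definite. Here I would use the freedom in the orientation $\nu$ relative to the dipole axis $n=e_3$: writing $\widehat K(\xi)=4\pi\,\xi_3^2/|\xi|^2-\tfrac{4\pi}{3}$, so that $K$ is, up to a multiple of $\delta$, the kernel of the nonnegative operator $-\partial_3(-\Delta)^{-1}\partial_3$, and noting that the $\delta$-part does not contribute to $\mathcal R$ (because $z=\sigma w$ is impossible for $z,w$ in the same open half-space), one computes $\mathcal R$ in Fourier variables and chooses $\nu$ (for instance perpendicular to $n$ when $\lambda_2>0$ and parallel to $n$ when $\lambda_2<0$) so that $\lambda_2\,\mathcal R(f_1-\sigma f_2)\le 0$. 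Granting this, $\gamma(x)+\gamma(y)\le E(v_n^{(1)})+E(v_n^{(2)})\le 2E(u_n)\to 2\gamma(c)$, which is midpoint concavity. Since $\gamma$ is locally bounded on $(0,\infty)$ — bounded above by $0$ and below by the ($c$-dependent but continuous) Gagliardo--Nirenberg bound established in the proof of Proposition \ref{betaneq0} — midpoint concavity upgrades to concavity on $(0,\infty)$; and since $\gamma\equiv 0$ on $(0,c_a)$ by Lemma \ref{Nonexistence of minimizers}, we have $\gamma(c)\to 0=\gamma(0)$ as $c\downarrow 0$, so $\gamma$ is concave, hence continuous, on all of $[0,\infty)$.

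Finally, for part \emph{(2)}: assume $\gamma(c_1+c_2)=\gamma(c_1)+\gamma(c_2)$, set $s:=c_1+c_2$, and take $c_1\in(0,s)$ without loss of generality. Concavity together with $\gamma(0)=0$ gives $\gamma(c_i)=\gamma\big(\tfrac{c_i}{s}\,s+(1-\tfrac{c_i}{s})\,0\big)\ge\tfrac{c_i}{s}\gamma(s)$ for $i=1,2$; summing and using the hypothesis forces both inequalities to be equalities, so $\gamma(c_1)=\tfrac{c_1}{s}\gamma(s)$. Thus the graph of $\gamma$ meets the chord joining $(0,0)$ and $(s,\gamma(s))$ at the interior point $c_1$, and a concave function that touches one of its chords at an interior point coincides with that chord over the whole interval; hence $\gamma(c)=\tfrac{\gamma(s)}{s}\,c$ for all $c\in[0,s]$, i.e.\ $\gamma$ is linear on $[0,c_1+c_2]$.
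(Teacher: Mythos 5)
Your construction is essentially the paper's (cut along a hyperplane, reflect each half, fix the two masses by an intermediate-value argument, control the dipolar cross term via the Lopes--Mari\c{s} reflection identities, then upgrade midpoint concavity and run the chord argument for part \emph{(2)}), and the algebraic identity you quote for $\mathcal B$ is correct. But the decisive step --- the sign of $\lambda_2\,\mathcal R(f_1-\sigma f_2)$ --- is asserted rather than computed, and the orientation you propose is exactly backwards. Dropping the $\delta$-part and writing $\widehat K(\xi)+\tfrac{4\pi}{3}=4\pi\xi_3^2/|\xi|^2$, the cross term for $g$ supported in the half-space $\{z\cdot\nu<t\}$ becomes, up to a positive constant,
\begin{equation*}
\mathcal R(g)=\iint \frac{\partial_3 g(z)\,\partial_3(\sigma g)(w)}{|z-w|}\;dz\,dw
\end{equation*}
(no surface term arises here because $f_1$ and $\sigma f_2$ have the same trace on the hyperplane), and its sign is decided by whether $\partial_3$ commutes or anticommutes with the reflection $\sigma$. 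If $\nu\parallel n=e_3$ (cutting plane $\{x_3=t\}$), then $\partial_3(\sigma g)=-\sigma(\partial_3 g)$ and reflection positivity of the Coulomb kernel gives $\mathcal R(g)\le 0$; if $\nu\perp n$ (cutting plane $\{x_1=t\}$), then $\partial_3$ commutes with $\sigma$ and $\mathcal R(g)\ge 0$. (Sanity check: two bumps mirrored across $\{x_3=t\}$ sit head-to-tail, where $K<0$; mirrored across $\{x_1=t\}$ they sit side by side, where $K>0$.) Hence one must take $\nu\parallel n$ when $\lambda_2>0$ and $\nu\perp n$ when $\lambda_2<0$ --- the opposite of your choices, and exactly what the paper does by using the reflections $u^3_{i,t_3}$ together with identity (4.42) of \cite{LopesMaris2008} for $\lambda_2>0$, and $u^1_{i,t_1}$ together with identity (4.31) for $\lambda_2<0$. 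With your stated orientations one gets $\lambda_2\,\mathcal R\ge 0$, the comparison $E(v_n^{(1)})+E(v_n^{(2)})\le 2E(u_n)$ fails, and midpoint concavity is not obtained.

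Once the two orientations are swapped and the sign claim is actually justified (either by the explicit half-space identities of \cite{LopesMaris2008} that the paper invokes, or by a careful version of the reflection-positivity computation sketched above), the rest of your argument is sound and parallels the paper: the upgrade from midpoint concavity to concavity (you use local boundedness below, the paper uses the monotonicity from Lemma \ref{strict decreasing}; both are legitimate), continuity at $c=0$ via Lemma \ref{Nonexistence of minimizers}, and the chord argument for part \emph{(2)}, which coincides with the paper's proof.
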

\begin{proof}
We follow the lines of \cite[Theorem 1.1]{Maris2016}. First, we show that
\begin{align}\label{concave 1}
\gamma\Big(\frac{1}{2}(c_1+c_2)\Big)\geq \frac{1}{2}\big(\gamma(c_1)+\gamma(c_2)\big)
\end{align}
for all $c_1,c_2\in[0,\infty)$: For a function $u\in H^1(\R^3,\C)$, we define the following extensions of $u$ with respect to the hyperplane $\{x\in\R^3:x_1=t\}$:
\begin{align*}
u^1_{1,t}(x)\defeq\left\{
             \begin{array}{ll}
                 u(x) &\text{ if }\,x_1\leq t,  \\
             u (2t-x_1,x_2,x_3)&\text{ if }\,x_1>t,
             \end{array}
\right.
\text{ and } {}&&
u^1_{2,t}(x)\defeq\left\{
             \begin{array}{ll}
             u (2t-x_1,x_2,x_3) &\text{ if }\,x_1\leq  t,  \\
             u(x)&\text{ if }\,x_1>t.
             \end{array}
\right.
\end{align*}
Analogously, we define the extensions $u^3_{1,t}$ and $u^3_{2,t}$ of $u$ with respect to the hyperplane $\{x\in\R^3:x_3=t\}$. Using the identity
\begin{align*}
\widehat{K}(\xi)=\frac{4\pi}{3}\,\frac{2\xi_3^2-\xi_1^2-\xi_2^2}{|\xi|^2}=-\frac{4\pi}{3}+\frac{4\pi\xi_3^3}{|\xi|^2},
\end{align*}
we can rewrite $E(u)$ into $E(u)=E_1(u)+E_2(u)$, where
\begin{equation}
 \label{e2u}
 \begin{aligned}
  E_1(u)& \defeq \frac{1}{2}\|\nabla u\|_2^2+\frac{1}{2}\Big(\lambda_1-\frac{4\pi}{3}\lambda_2\Big)\|u\|_4^4+\frac{2}{p}\|u\|_p^p,\\
  E_2(u)& \defeq \frac{2\pi \lambda_2}{(2\pi)^3}\int_{\R^3}\frac{\xi_3^2}{|\xi|^2}\,\big|\mathcal{F}(|u|^2)(\xi)\big|^2\;d\xi.
 \end{aligned}
\end{equation}
Now let $\varepsilon>0$ be arbitrary and $u\in S\big(\frac{1}{2}(c_1+c_2)\big)$ with $E(u)\leq \gamma\big(\frac{1}{2}(c_1+c_2)\big)+\frac{\varepsilon}{2}$. We have that
\begin{equation*}
\begin{aligned}
\|u^i_{1,t}\|_p^p&=2\int_{x_i\leq t}|u|^pdx,\\
\|u^i_{2,t}\|_p^p&=2\int_{x_i\geq t}|u|^pdx
\end{aligned}
\end{equation*}
for $i\in\{1,3\}$. It also holds that $t\mapsto \|u^i_{1,t}\|_2^2$ is continuous, $\|u^i_{1,t}\|_2^2\to 0$ as $t\to-\infty$ and $\|u^i_{1,t}\|_2^2\to c_1+c_2$ as $t\to\infty$. Thus we can find some $t_1$ and $t_3$ such that
\begin{equation*}
\begin{aligned}
\|u^i_{1,t_i}\|_2^2=c_1\text{ and }\|u^i_{2,t_i}\|_p^p=c_2.
\end{aligned}
\end{equation*}
for $i\in\{1,3\}$. If $\lambda_2$ is positive, we utilize \eqref{e2u} and obtain from \cite[(4.42)]{LopesMaris2008} that
\begin{equation}\label{sym1}
\begin{aligned}
&E(u^3_{1,t_3})+E(u^3_{2,t_3})-2E(u)\\
&=-\frac{4\lambda_2}{(2\pi)^3}\int_{\R^2}\big|\xi_3'\big|\,\bigg|\int_0^\infty\mathcal{F}\big(A_3(|u|^2)\big)(\xi)\,\frac{\xi_3}{|\xi|^2}\;d\xi_3\bigg|^2\; d\xi'_3\leq 0,
\end{aligned}
\end{equation}
where $A_3(\phi)\defeq\frac{1}{2}(\phi(x_1,x_2,x_3)-\phi(x_1,x_2,-x_3))$ and $\xi_3'=(\xi_1,\xi_2)$. If $\lambda_2=0$, then it follows directly that $E(u^3_{1,t_3})+E(u^3_{2,t_3})=2E(u)$. If $\lambda_2$ is negative, we utilize \eqref{e2u} and obtain from \cite[(4.31)]{LopesMaris2008} that 
\begin{equation}\label{sym2}
\begin{aligned}
&E(u^1_{1,t_1})+E(u^1_{2,t_1})-2E(u)\\
&=\frac{4\lambda_2}{(2\pi)^3}\int_{\R^2}\frac{\xi_3^2}{\sqrt{\xi_2^2+\xi_3^2}}\,\bigg|\int_0^\infty\mathcal{F}\big(A_1(|u|^2)\big)(\xi)\,\frac{\xi_1}{|\xi|^2}\;d\xi_1\bigg|^2\; d\xi'_1\leq 0.
\end{aligned}
\end{equation}
Summing up, we obtain
\begin{align}
\gamma(c_1)+\gamma(c_2)\leq{}& E(u^i_{1,t_i})+E(u^i_{2,t_i})\leq 2E(u)\leq 2\Big(\gamma\Big(\frac{1}{2}(c_1+c_2)\Big)+\frac{\varepsilon}{2}\Big)\nonumber\\
={}&2\gamma\Big(\frac{1}{2}(c_1+c_2)\Big)+\varepsilon,
\end{align}
for all $\lambda_2\in\R$. Since $\varepsilon$ is arbitrary, we obtain \eqref{concave 1}. Together with Lemma \ref{Nonexistence of minimizers} and Lemma \ref{strict decreasing} we see that the statements
\begin{enumerate}
\item $\gamma\big(\frac{1}{2}(c_1+c_2)\big)\geq\frac{1}{2}\big(\gamma(c_1)+\gamma(c_2)\big)$ for all $c_1,c_2\in[0,\infty)$, and
\item $\gamma$ is nonincreasing on $[0,\infty)$,
\end{enumerate}
hold true. Thus, standard convex analysis implies the concavity of $\gamma$.

Suppose now that there exist some $c_1,c_2\in(0,\infty)$ such that $\gamma(c_1+c_2)=\gamma(c_1)+\gamma(c_2)$. From the concavity of $\gamma$ follows that
\begin{align*}
 \gamma(c_1)= \gamma\left(\frac{c_1}{c_1+c_2}\,(c_1+c_2)\right)\geq{}& \frac{c_1}{c_1+c_2}\gamma(c_1+c_2)+\frac{c_1}{c_1+c_2}\gamma(0)=\frac{c_1}{c_1+c_2}\gamma(c_1+c_2),\\
 \gamma(c_2)\geq{}& \frac{c_2}{c_1+c_2}\gamma(c_1+c_2).
\end{align*}
 Summing both inequalities we obtain that $\gamma(c_1+c_2)\leq \gamma(c_1)+\gamma(c_2)$ and that equality holds if and only if $\gamma(c_i)= \frac{c_i}{c_1+c_2}\gamma(c_1+c_2)$ for $i=1,2$. But this is already the case. Thus, the concave function $\gamma$ coincides with a linear function at $0,c_1,c_2$ and $c_1+c_2$, so that itself must be linear in $[0,c_1+c_2]$. This completes the proof of the lemma.
\end{proof}
\section{Existence of ground states}
This entire section is devoted to the proof of existence of standing waves and their qualitative properties. Using the geometry of the infimum function we will exclude the possibility of dichotomy for a minimizing sequence.
\begin{proof}[Proof of Theorem \ref{Theorem 1.1}]  If $\lambda_1,\lambda_2$ satisfy \eqref{case_a1} or \eqref{case_a2}, nonexistence of minimizers on $S(c)$ for all $c\in(0,\infty)$ is already proven in Lemma \ref{Nonexistence of minimizers}.

We thus assume that $\lambda_1,\lambda_2$ satisfy either \eqref{case_a3} or \eqref{case_a4}. The fact that the supremum in \eqref{c_b} is achieved, follows the continuity and  monotonicity of $\gamma$.

Next, we show the nonexistence of minimizers for $E$ on $S(c)$ for $c\in (0,c_b)$. Suppose that for some $c\in(0,c_b)$, $E$ possesses a minimizer $u$ on $S(c)$. Then $\gamma(u)=0$ so that $E(u)=0$ and $B(u)<0$. But then from proof of Lemma \ref{strict decreasing}, we have that the functions $t\mapsto E({}^tu)$ and $t \mapsto \lnorm {}^{t}u\rnorm_2^2$ are both continuous and respectively strictly decreasing and strictly increasing in $(1,\infty)$. Thus there exists $t_0>1$ such that $c<\lnorm {}^{t_0}u\rnorm_2^2<c_b$ and $E({}^{t_0}u)<0$, which contradicts the definition of $c_b$.

We next show that for all $c\in(c_b,\infty)$, $E(u)$ possesses at least one minimizer on $S(c)$, using  the classical concentration compactness lemma (\cite[Lemma III.1]{Lions1984}). Let $\{u_n\}_{n\in\N}\subset S(c)$ be a minimizing sequence, i.e., $E(u_n)=\gamma(c)+o(1)$, which is bounded in $H^1(\R^3;\C)$ due to Remark \ref{uniform boundedness}. Then one of the following three cases may occur:
\begin{enumerate}
\item \textsl{Compactness}: In this case, using a truncation argument one directly obtains that $u_n$ converges strongly to some $u$ in $L^2(\R^3;\C)$. It turns out that $u$ is also in $H^1(\R^3;\C)$ due to uniqueness of weak limits and the $H^1$-boundedness of the minimizing sequence. From the Gagliardo-Nirenberg inequality we also see that $u_n$ converges to $u$ in $L^p(\R^3;\C)$ for all $p\in[2,6)$. Using the lower semicontinuity of the $L^p$-norm and the strong $L^4$-convergence we obtain that
\begin{align*}
c_u&:=\|u\|_2^2= \lim_{n\to\infty}\|u_n\|_2^2=c,\\
E(u)&\leq \liminf_{n\to\infty} E(u_n)=\gamma(c).
\end{align*}
But since $u\in S(c)$, we have $E(u)\geq \gamma(c)$, and therefore $E(u)=\gamma(c)$. Hence $u$ is a minimizer for $E$ on $S(c)$.
\item \textsl{Vanishing}: If this were the case, then due to Lions' lemma \cite[Lemma I.1]{Lions2} we must obtain that $u_n$ converges to zero in $L^4$. But estimating like in \eqref{minimizing equal zero} leads to the contradiction $\gamma(c)\geq 0$.
\item \textsl{Dichotomy}: From the proof of Lemma \ref{maris} we obtain that
\begin{align}\label{subadd1}
\gamma(c)\leq\gamma(\alpha)+\gamma(c-\alpha).
\end{align}
for all $c\in(c_b,\infty)$ and $\alpha\in(0,c)$. But if dichotomy occurs, we must have
\begin{align}\label{subadd2}
\gamma(c)\geq\gamma(\alpha)+\gamma(c-\alpha).
\end{align}
From \eqref{subadd1} and \eqref{subadd2} follows
\begin{align}\label{subadd3}
\gamma(c)=\gamma(\alpha)+\gamma(c-\alpha).
\end{align}
Again, due to Lemma \ref{maris}, $\gamma$ must be linear on $[0,c]$. But then since $\gamma$ is constantly equal to zero on $[0,c_b]$, it follows that $\gamma(c)=0$, a contradiction.
\end{enumerate}
Since vanishing and dichotomy is ruled out, we obtain the existence of a minimizer of $E$ on $S(c)$ for all $c\in(c_b,\infty)$.
\end{proof}
\begin{proof}[Proof of Proposition \ref{Theorem 1}]
 The first two claims given by the fourth statement come directly from \cite[Theorem 4.9]{LopesMaris2008}. For $p\in(4,6)$ and $\lambda_2>0$, the problem that if all minimizers $u$ are (up to translation) axially symmetric in $x_3$ is still open. However, taking $c_1=c_2=c$, we obtain from \eqref{sym1} that if $u$ is a minimizer, then $u^3_{1,t_3}$ and $u^3_{2,t_3}$ are (up to translations) symmetric with respect to the $(x_1,x_2)$-plane and also minimizers of $E$ on $S(c)$. Using similar arguments involving the representation formulas \eqref{sym1} and \eqref{sym2} we are also able to obtain the remaining three claims given by the second statement for $p=6$.

 Finally, the second and third statements are well known results of elliptic theory. We refer to \cite[Theorem 2]{BellazziniEtAl2017} and \cite[Theorem 8.1.1]{Cazenave2003} for the respective proofs.
\end{proof}

%

\section{The case of an active trapping potential}
\paragraph{Existence of minimizers.}
Let $\{u_n\}_{n\in\N}\subset S(c)$ be a minimizing sequence, i.e., $E(u_n)=\gamma(c)+o(1)$. Then due to Lemma \ref{compactness trapping}, $\{u_n\}_{n\in\N}$ converges weakly to some $u\in S(c)$, such that $u_n\strongly u$ in $L^p$ for all $p\in[2,6)$. Using the lower semicontinuity of $L^p$-norm and strong $L^4$-convergence (which implies a.e. convergence) we immediately obtain that $u$ is a minimizer of $E$ on $S(c)$.
\paragraph{Well-posedness theory.}
For a proof of the unique local solution, we refer to \cite[Theorem 9.2.6]{Cazenave2003}. In order to show that the solution is global we estimate like in \eqref{boundedness u_4} and \eqref{boundedness nablapsi}, by replacing $\|\nabla\psi\|_2^2$ to $\|\nabla\psi\|_2^2+\|V_{ext}|u|^2\|_1$. 

\end{document}